\def\sfrac#1#2{#1/#2}
\def\afrac#1#2{#1/(#2)}
\def\vafrac#1#2{(#1)/(#2)}
\def\sklfrac#1#2{(#1/#2)}
\newcommand{\ee}{\mathrm{e}}
\renewcommand{\mathring}[1]{\accentset{\circ}{#1}}
\newcommand{\rrvert}{\vert}
\newcommand{\rrVert}{\Vert}
\newcommand{\llvert}{\vert}
\newcommand{\llVert}{\Vert}
\renewcommand{\mid}{|}
\newcommand{\mathds}{\mathbh}
\newcommand{\dd}{\mathrm{d}}
\newcommand{\dif}{\mathrm{d}}
\newtheorem{Th}{Theorem}[section]
\newtheorem{Lem}[Th]{Lemma}
\newtheorem{Prop}[Th]{Proposition}
\newtheorem{Cor}[Th]{Corollary}
\newcommand{\Var}{\operatorname{Var}}
\def\E{{\mathbb{E}}}
\def\N{{\mathbb{N}}}
\def\R{{\mathbb{R}}}
\begin{document}
\begin{frontmatter}

\title{Approximation of improper priors}
\runtitle{Approximation of improper priors}

\begin{aug}
\author[A]{\inits{C.}\fnms{Christele}~\snm{Bioche}\corref{}\thanksref{e1}\ead[label=e1,mark]{Christele.Bioche@math.univ-bpclermont.fr}}
\and
\author[A]{\inits{P.}\fnms{Pierre}~\snm{Druilhet}\thanksref{e2}\ead[label=e2,mark]{Pierre.Druilhet@univ-bpclermont.fr}}
\address[A]{Laboratoire de Math\'{e}matiques, UMR CNRS~6620, Clermont
Universit\'e, Universit\'{e} Blaise Pascal, 63177~Aubi\`ere Cedex,
France.
\printead{e1}, \printead*{e2}}
\end{aug}

%
\received{\smonth{7} \syear{2014}}
%
\revised{\smonth{1} \syear{2015}}

%
\begin{abstract}
We propose a convergence mode for positive Radon measures which allows
a sequence of probability measures to have an improper limiting measure.
We define a sequence of vague priors as a sequence of probability
measures that converges to an improper prior.
We consider some cases where vague priors have necessarily large
variances and other cases where they have not.
We study the consequences of the convergence of prior distributions on
the posterior analysis.
Then we give some constructions of vague priors that approximate the
Haar measures or the Jeffreys priors.
We also revisit the Jeffreys--Lindley paradox.
\end{abstract}

%
\begin{keyword}
\kwd{approximation of improper priors}
\kwd{conjugate priors}
\kwd{convergence of prior}
\kwd{Jeffreys--Lindley paradox}
\kwd{non-informative priors}
\kwd{the Jeffreys prior}
\kwd{vague priors}
\end{keyword}
\end{frontmatter}

\section{Introduction}

Improper priors such as flat priors (Laplace \cite{MR1400403}),
Jeffreys priors (Jeffreys \cite{MR0017504}), reference priors
(Berger \textit{et~al.} \cite{MR2502655}) or the Haar measures (Eaton \cite{MR1089423}) are often used in Bayesian
analysis when no prior information is available.
The posterior distribution is obtained by applying the formal Bayes rule.
There are several approaches to justify the use of improper priors in
statistics.
Taraldsen and Lindqvist \cite{MR2757006} explain how the theory of conditional
probability spaces developed by R{\'e}nyi \cite{MR0264723} is related to a theory for
statistics that includes improper priors.
Their article is based on a generalization of Kolmogorov's theory to
the $\sigma$-finite measures.
They show in particular by examples that this theory is different from
the alternative theory of improper priors provided by Hartigan \cite{MR715782}.
For many authors, the inference based on an improper prior $\Pi$ is
legitimated as limit of inferences based on proper priors $\Pi_n$.
However, there are several ways to define this limit.
For example, Jeffreys \cite{MR0187257}, Stone \cite{MR0266359}, Bernardo and Smith (\cite{MR1274699},
Proposition 5.11), Jaynes \cite{MR1992316} consider the convergence, for any
given observation $x$,
of the posterior distributions $\Pi_n(\cdot\mid x)$ to $\Pi(\cdot\mid x)$
for some convergence mode such as total variation.
Stone \cite{MR0149586} consider a convergence mode involving both the posterior
distribution and the marginal distribution.

All these convergence modes are related to the statistical model
through the likelihood.
Moreover, there is no standard convergence mode such that a sequence
$\Pi_n$ of proper priors may converge to an improper prior $\Pi$
independently on the statistical model.
Consider, for example, a sequence of normal distributions $\mathcal
N(0,n)$ with zero mean and variance equal to $n$; it is often admitted
that this sequence converges to the Laplace prior since for
many statistical models the Bayes estimate related to $\mathcal N(0,n)$
converges to the Bayes estimate for the Laplace prior.
A~question then arises:
does the limiting behaviour of a sequence of proper priors depend on
the statistical model?
Is there any intrinsic convergence mode?

The aim of this paper is to define a convergence mode on the set of
prior distributions without reference to any statistical model.
In Section~\ref{section2}, we define this convergence mode.
We show that a sequence of vague priors is related to at most one
improper prior.
We also show that any improper distribution can be approximated by
proper distributions and reciprocally.
In Section~\ref{section3}, we give some conditions on the likelihood
to derive convergence of posterior distributions and Bayesian
estimators from the convergence of prior distributions.
In Section~\ref{nvellesection3}, we give some examples of
construction of sequences of probability measures which converge to
improper priors such as the Haar measure or the Jeffreys prior.
In Section~\ref{section4}, we give a special interest in the
convergence of Beta distributions.
In Section~\ref{section5}, we revisit the Jeffreys--Lindley paradox in
the light of our convergence mode.

\section{Definition, properties and examples of $q$-vague convergence}\label{section2}

Let $X$ be a random variable and assume that $X\mid\theta\sim P_{\theta
}, \theta\in\Theta$. We assume that $\Theta$ is in $\mathbb{R}$,
$\mathbb{R}^p$ with $p>1$, or a countable set.
In the Bayesian paradigm, a prior distribution $\Pi$ is given on
$\Theta$.
In this article, we always assume that a prior $\Pi$ is a positive
Radon measure, {that is}, a positive measure which is finite on
compact sets.
So, a prior may be proper or improper.
We denote by $\pi$ the density function with respect to the Lebesgue
measure in the continuous case and the counting measure in the discrete
case, or more generally to some $\sigma$-finite measure.
If $\Pi$ is a probability measure, we can use the Bayes formula to
write the posterior density:
%
%
\begin{equation}
\label{eqformuleBayes} \pi(\theta\mid x)=\frac{f(x\mid\theta) \pi
(\theta)}{\int_\Theta
f(x\mid\theta)\pi(\theta)\,\dd\theta},
\end{equation}
where $f(x\mid\theta)$ is the likelihood function.

If $\Pi$ is an improper measure but $\int_\Theta f(x\mid\theta)\pi
(\theta)\,\dd\theta< +\infty$, we can formally apply the Bayes
formula to get a posterior distribution which will be proper.
Now, if we replace $\Pi$ by $\alpha\Pi$, for $\alpha>0$, we obtain
the same posterior distribution.
So, in this case, the posterior distribution is proper and independent
of changes in the scaling of the prior.
If $\Pi$ is an improper measure with $\int_\Theta f(x\mid\theta)\pi
(\theta)\,\dd\theta= +\infty$, we cannot apply the Bayes formula.
But in this article, we allow posterior distribution to be improper,
and in this case we will define it by $\pi(\theta\mid x)=f(x\mid\theta
) \pi(\theta)$ up to within a scalar factor.

We denote by $\mathcal{C}_K(\Theta)$ the space of real-valued
continuous functions on $\Theta$ with compact support and by $\mathcal
{C}_K^+(\Theta)$ the positive functions in $\mathcal{C}_K(\Theta)$.
When there is no ambiguity on the space, they will be simply denoted by
$\mathcal{C}_K$ or $\mathcal{C}_K^+$.
We also introduce the notation $\mathcal{C}_b(\Theta)$ for the space
of bounded continuous functions on $\Theta$,
and $\mathcal{C}_0(\Theta)$ for the space of continuous functions $g$
such that for all $\varepsilon>0$, there exists a compact $K\subset
\Theta$ such that for all $\theta\in K^c$, $g(\theta)<\varepsilon$.
We use the notation $\Pi(h)=\int_{\Theta}h\,\dd\Pi$ where $h$ is a
measurable real-valued function, and $\llvert \Pi\rrvert =\Pi(1)=\int
_{\Theta}\dd
\Pi$, the total mass of $\Pi$.

We recall the two classic kinds of convergence of measures (Bauer \cite{MR1897176}).
A sequence of probability measures $\{\Pi_n\}_n$ converges narrowly
(also said weakly) to a probability measure $\Pi$ if,
for every function $\phi$ in $\mathcal{C}_b(\Theta)$, $\{\Pi_n(\phi
)\}_n$ converges to $\Pi(\phi)$.
A sequence of positive Radon measures $\{\Pi_n\}_n$ converges vaguely
to a positive Radon measure $\Pi$ if, for every function $\phi$ in
$\mathcal{C}_K(\Theta)$, $\{\Pi_n(\phi)\}_n$ converges to $\Pi
(\phi)$.
We also recall a characterization of vague convergence for a sequence
of probability measures which will be useful later in the article.

%
\begin{Lem}[(Billingsley \cite{MR830424}, page~393)]\label{lienconvvagueconvetroite}
If $\{\Pi_n\}_n$ is a sequence of probability measures and $\Pi$ is a
probability measure,
then $\{\Pi_n\}_n$ converges vaguely to $\Pi$ iff for all $g\in
\mathcal{C}_0(\Theta)$, $\{\Pi_n(g)\}_n$ converges to $\Pi(g)$.
\end{Lem}

\subsection{Convergence of prior distribution sequences}\label{sectiondefconv}

In this section, we define a new convergence mode for sequences of
positive Radon measures.
The aim is to propose a formalization of an usual practice which
consists of approximate an improper prior with a sequence of proper priors.

%
\begin{Def}\label{caractconv}
A sequence of positive Radon measures $\{\Pi_n\}_n$ is said to
converge $q$-vaguely to a positive Radon measure $\Pi$ if there exists
a sequence of positive real numbers $\{a_n\}_n$ such that
$\{ a_n\Pi_n\}_n$ converges vaguely to $\Pi$.
\end{Def}

Let us justify this definition.
In equation~(\ref{eqformuleBayes}), if we replace $\Pi$ by $\alpha
\Pi$, for $\alpha>0$, we obtain the same posterior distribution,
which means that the prior distribution is defined up to within a
scalar factor.
So, it is natural to define the equivalence relation $\sim$ on the
space of positive Radon measures by
%
%
\begin{equation}
\label{releq} \Pi\sim\Pi'\quad\Longleftrightarrow\quad\exists\alpha>0\qquad
\mbox{such that }\Pi=\alpha\Pi'.
\end{equation}
Then it is natural to define the quotient space of positive Radon measures
by the equivalence relation $\sim$.
We denote by $\overline{\Pi}$ the equivalence class of $\Pi$,
{that is}, $\overline{\Pi} =\{\widetilde{\Pi}/\exists
\alpha>0,\widetilde{\Pi}=\alpha\Pi\}$.
The $q$-vague convergence corresponds to the standard quotient topology
on this quotient space.

%
\begin{Rem}
One referee pointed out that similar quotient spaces for $\sigma
$-finite measures were considered by Taraldsen and Lindqvist \cite{Tara2010} to
define conditional measures.
\end{Rem}

%
\begin{Prop}\label{remcaractconv}
Let $\{\Pi_n\}_n$ and $\Pi$ be positive Radon measures.
The sequence $\{\Pi_n\}_n$ converges $q$-vaguely to $\Pi$ iff $\{
\overline{\Pi}_n\}_n$ converges to $\overline{\Pi}$ for the
quotient topology.
\end{Prop}

\begin{pf}\label{appendixproofcaract}
$\bullet$
Direct part: Assume that $\lim_{n\to\infty} \overline{\Pi}_n =
\overline{\Pi}$. The space of positive
Radon measures is a metrisable space so it admits a countable
neighbourhood base.
Thus, there exists a decreasing sequence of open sets $\{O_i\}_{i\in
\mathbb{N}}$ in the space of positive Radon measures such that for all
$i\in\N, \Pi\in O_i$ and $\bigcap_{i\in\mathbb{N}}O_i=\{\Pi\}$.
So, for all $i\in\mathbb{N}$, $\overline{\Pi}\in\overline{O}_i$.
For any $O_i$, there exists $N_i$ such that for all $n>N_i$, $\overline
{\Pi}_n\in\overline{O}_i$.
Without lost of generality, we can choose $N_i$ such that $N_i>N_{i-1}$.
For all $n$ such that $N_i\leq n<N_{i+1}$, $\Pi_n\in\mathcal
{C}(O_i)$ where $\mathcal{C}(O_i)=\{\lambda x$ with $\lambda>0$ and
$x\in O_i\}$,
{that is}, for all $n$ such that $N_i\leq n < N_{i+1}$,
there exists $a_n>0$ such that $a_n\Pi_n\in O_i$.
Moreover, since $\bigcap_{i\in\mathbb{N}} O_i=\{\Pi\}$, $\lim_{n\to
\infty}a_n\Pi_n=\Pi$.

$\bullet$ 
Converse part: Assume that $\{a_n\Pi_n\}_n$ converges to $\Pi$.
Since the canonical mapping $\phi$ defined by
%
%
\begin{eqnarray}
\label{defphi} \phi\dvtx \mathcal{R} & \to& \mathcal{R}/\sim,
\nonumber
\\[-8pt]
\\[-8pt]
\nonumber
\Pi& \mapsto& \overline{\Pi},
\end{eqnarray}
where $\mathcal{R}$ is the space of positive Radon measures, is
continuous, $\{\phi(a_n\Pi_n)\}=\{\overline{\Pi}_n\}$ converges to
$\phi(\Pi)=\overline{\Pi}$.
%
\end{pf}

The following proposition shows that a sequence of prior measures
cannot converge $q$-vaguely to more than one limit up to within a
scalar factor.

%
\begin{Th}\label{unicitelimite}
Let $\{\Pi_n\}_n$ be a sequence of priors such that $\{\Pi_n\}_n$
converges $q$-vaguely to both $\Pi_a$ and $\Pi_b$,
then necessarily there exists $\alpha>0$ such that $\Pi_a=\alpha\Pi_b$.
\end{Th}

\begin{pf}
This is a direct consequence of Proposition~\ref{Hausdorffspace} that
states that $\overline{\mathcal{R}}$ is a Hausdorff space.
However, we give here a direct proof that does not involve abstract
topological concept.

Assume that $\{\Pi_n\}_n$ converges $q$-vaguely to both $\Pi_a$ and
$\Pi_b$. From Definition~\ref{caractconv},
there exist two sequences of positive scalars $\{a_n\}_n$ and $\{b_n\}
_n$ such that
$\{a_n\Pi_n\}_n$, respectively, $\{b_n\Pi_n\}_n$, converges vaguely to
$\Pi
_a$, respectively, $\Pi_b$.
We have to prove that $\Pi_b= \alpha\Pi_a$ for some positive scalar~$\alpha$.
Since $\Pi_a\neq0$ and $\Pi_b\neq0$, there exist $h_a$ and $h_b$ in
$\mathcal{C}_K^+$ such that $\Pi_a(h_a)>0$ and $\Pi_b(h_b)>0$. Put
$h_0=h_a+h_b$; we have $\Pi_a(h_0)>0$ and $\Pi_b(h_0)>0$.
Moreover, $\lim_{n\to\infty} a_n\Pi_n(h_0) = \Pi
_a(h_0)$ and $\lim_{n\to\infty} b_n\Pi_n(h_0) = \Pi
_b(h_0)$. So,
there exists $N$ such that for $n\geq N$, $a_n\Pi_n(h_0)>0$ and
$b_n\Pi_n(h_0)>0$.
For any $h$ in $\mathcal{C}_K$ and $n>N$,
$\lim_{n\to\infty}\frac{\Pi_n(h)}{\Pi_n(h_0)} = \lim_{n\to\infty}\frac
{a_n\Pi_n(h)}{a_n\Pi_n(h_0)} = \frac
{\Pi_a(h)}{\Pi_a(h_0)}$
and $\lim_{n\to\infty}\frac{\Pi_n(h)}{\Pi_n(h_0)} = \lim_{n\to\infty
}\frac{b_n\Pi_n(h)}{b_n\Pi_n(h_0)} = \frac{\Pi_b(h)}{\Pi_b(h_0)}$.
By uniqueness of the limit in $\mathbb{R}$, $\frac{\Pi_a(h)}{\Pi
_a(h_0)} = \frac{\Pi_b(h)}{\Pi_b(h_0)}$.
Therefore, $\Pi_a = \frac{\Pi_a(h_0)}{\Pi_b(h_0)}\Pi_b$. The
result follows.
\end{pf}

Theorem~\ref{espquotient} motivates to include the improper priors in
the theory since it shows these are obtained naturally from limits of
proper priors.
This can be compared with a completion of a metric space.

%
\begin{Th}\label{espquotient}
Any improper measure may be approximated by a sequence of probability
measures and
conversely, any proper measure may be approximated by a sequence of
improper measures.
\end{Th}

\begin{pf}
$\bullet$ Consider an improper measure $\Pi$ and $\{K_n\}_n$ an increasing
sequence of compacts such that $\Theta=\bigcup_n K_n$.
Then $\Pi_n=\Pi\mathds{1}_{K_n}$ is a proper measure so, $\frac
{1}{|\Pi_n|}\Pi_n$ is a probability measure.
Moreover, $\{\Pi_n\}_n$ converges vaguely to $\Pi$, so $\{\frac
{1}{|\Pi_n|}\Pi_n\}$ converges $q$-vaguely to $\Pi$.

$\bullet$ 
Let $\Pi$ be a probability measure.
Consider the sequence $\Pi_n=\Pi+\alpha_n\Pi'$ where $\Pi'$ is an
improper measure and $\{\alpha_n\}_n$ is a decreasing sequence which
converges to $0$.
Then, for all $n\in\mathbb{N}$, $\Pi_n$ is an improper measure and
$\{\Pi_n\}_n$ converges $q$-vaguely to $\Pi$.
%
\end{pf}

In many statistical models, there are several parameterizations of
interest. We show that the \mbox{$q$-}vague convergence is invariant by change
of parameterization.
Consider\vspace*{1pt} a new parameterization $\eta=h(\theta)$ where $h$ is a homeomorphism.
We denote by $\widetilde{\Pi}_n=\Pi_n\circ h^{-1}$ and $\widetilde
{\Pi}=\Pi\circ h^{-1}$ the prior distribution on $\eta$ derived from
the prior distribution on $\theta$.
The following proposition establishes a link between $q$-vague
convergence of $\{\Pi_n\}_n$ and $\{\widetilde{\Pi}_n\}_n$.

%
\begin{Prop}\label{chgmtparam}
Let\vspace*{1pt} $\{\Pi_n\}_n$ be a sequence of priors which converges $q$-vaguely
to $\Pi$.
Let $h$ be a homeomorphism and consider the parameterization $\eta
=h(\theta)$.
Then $\{\widetilde{\Pi}_n\}_n$ converges $q$-vaguely to $\widetilde
{\Pi}$.
\end{Prop}

\begin{pf}
From the change of variables formula,
$\int g(h(\theta))\,\dif\Pi_n(\theta)=\int g(\eta) \,\dif\widetilde
{\Pi}_n(\eta)$ and  $\int g(h(\theta))\,\dif\Pi(\theta)=\int g(\eta
) \,\dif\widetilde{\Pi}(\eta)$.
Moreover, if $\{\Pi_n\}_n$ converges $q$-vaguely to $\Pi$, from
Definition~\ref{caractconv} there exists $\{a_n\}_n$ such that $\{
a_n\Pi_n\}_n$ converges vaguely to $\Pi$.
Note that for all $g\in\mathcal{C}_K$, $g\circ h\in\mathcal{C}_K$.
So,\vspace*{1pt} for all $g\in\mathcal{C}_K$, $\lim_{n\to\infty} a_n \int g(h(\theta
)) \,\dif\Pi_n(\theta) = \int g(h(\theta
)) \,\dif\Pi(\theta)$,
{that is}, $\lim_{n\to\infty} a_n \int g(\eta
) \,\dif\widetilde{\Pi}_n(\eta) = \int g(\eta) \,\dif
\widetilde{\Pi}(\eta)$.
Thus, $\{\widetilde{\Pi}_n\}_n$ converges $q$-vaguely to $\widetilde
{\Pi}$.
\end{pf}

\subsection{Convergence when approximants are probabilities}

In this section, the sequence of approximants $\{\Pi_n\}_n$ is assumed
to be a sequence of probability measures.
Then we can establish some links between $q$-vague and narrow convergence.

Indeed, if $\{\Pi_n\}_n$ is a sequence of probabilities and $\Theta$
is a compact set, $q$-vague convergence is equivalent to narrow convergence.

More generally, we give a necessary and sufficient condition for the
narrow convergence of a sequence of probabilities which converges
$q$-vaguely to a probability.
We recall that a sequence of bounded measures $\{\Pi_n\}_n$ is said to
be tight if, for each $\varepsilon>0$, there exists a compact set $K$
such that, for all $n$, $\Pi_n(K^c)<\varepsilon$.

%
\begin{Prop}\label{lienconvq-vetnarrow}
Let $\{\Pi_n\}_n$ and $\Pi$ be probability measures such that $\{\Pi
_n\}_n$ converges $q$-vaguely to $\Pi$.
Then $\{\Pi_n\}_n$ converges narrowly to $\Pi$ iff $\{\Pi_n\}_n$ is tight.
\end{Prop}

\begin{pf}
Direct part: $\{\Pi_n\}_n$ converges narrowly to $\Pi$ a probability
measure so $\{\Pi_n\}_n$ is tight.

Converse part: Let us show that if $\{\Pi_{n_k}\}_k$ is any
subsequence of $\{\Pi_n\}_n$ which converges narrowly then $\{\Pi
_{n_k}\}_k$ converges to $\Pi$.
From Billingsley (\cite{MR830424}, Theorem 25.10), there exists a subsequence $\{
\Pi_{n_k}\}_{k}$ of $\{\Pi_n\}_n$ which converges narrowly to some
probability measure, say $\widetilde{\Pi}$.
Since $\{\Pi_{n_k}\}_{k}$ is a sequence of probabilities which
converges narrowly to $\widetilde{\Pi}$, from Definition~\ref
{caractconv}, $\{\Pi_{n_k}\}_{k}$ converges $q$-vaguely to $\widetilde
{\Pi}$.
So, from Theorem~\ref{unicitelimite}, there exists $\alpha>0$ such
that $\Pi= \alpha\widetilde{\Pi}$, but $\Pi$ and $\widetilde
{\Pi}$ are probabilities. So $\Pi=\widetilde{\Pi}$.
The result follows from Billingsley (\cite{MR830424}, Corollary of Theorem~25.10, page~346).
\end{pf}

Now, we also assume that the limiting measure $\Pi$ is an improper measure.
Then we can give a result about the sequence $\{a_n\}_n$ which will be
useful thereafter.

%
\begin{Lem}\label{aninf}
Let $\{\Pi_n\}_n$ be a sequence of probability measures and $\{a_n\}
_n$ a sequence of positive scalars such that
$\{a_n\Pi_n\}_n$ converges vaguely to $\Pi$. If $\Pi$ is improper,
then necessarily $\lim_{n\to\infty} a_n = +\infty$.
\end{Lem}

\begin{pf}
We assume that $\{a_n\Pi_n\}_{n}$ converges vaguely to $\Pi$
so, we have $\Pi(\Theta) \leq \liminf_n a_n \Pi
_n(\Theta)$ (see Bauer \cite{MR1897176}, Theorem 30.3).
But for all $n \in\N$, $\Pi_n(\Theta) = 1$ so $\Pi(\Theta
) \leq \liminf a_n$.
Moreover, $\Pi(\Theta) = +\infty$ so $\liminf_n a_n = +\infty$. The
result follows.
\end{pf}

%
\begin{Lem}[(Lang \cite{Lang}, page~38)]\label{existancefctinfind}
Let $E$ be $\R$, $\R^p$ with $p>1$ or a countable set,
for all compact $K_0 \subset (\bigcup_{n>0}\mathring{K}_n ) = E$,
there exists a function $h \in\mathcal{C}_K(E)$ such that $\mathds
{1}_{K_0} \leq h \leq 1$.
\end{Lem}

When a sequence of proper priors is used to approximate an improper
prior, the mass tends to concentrate outside any compact set.

%
\begin{Prop}\label{lemcompact}
Let $\{\Pi_n\}_n$ be a sequence of probability measures which
converges $q$-vaguely to an improper prior $\Pi$.
Then, for any compact $K$ in $\Theta$, $\lim_{n\to\infty
}\Pi_n(K)=0$, and consequently, $\lim_{n\to\infty}\Pi_n(K^c)=1$.
\end{Prop}

\begin{pf}
From Definition~\ref{caractconv}, there exists $\{a_n\}_n$ such that
$\lim_{n\to\infty} a_n\Pi_n(h) = \Pi(h)$ for any $h$
in~$\mathcal{C}_K$.
From Lemma~\ref{aninf}, $\lim_{n\to\infty} a_n=+\infty$
whereas $\Pi(h)<+\infty$, so $\lim_{n\to\infty}\Pi_n(h)=0$.
Let $K_0$ be a compact set in $\Theta$. From Lemma~\ref
{existancefctinfind}, there exists a function $h\in\mathcal{C}_K$ such that
$\mathds{1}_{K_0}\leq h$.
So $\Pi_n(K_0)\leq\Pi_n(h)$ and $\lim_{n\to\infty
}\Pi_n(K_0)=0$.
Since $\Pi_n(K_0)+\Pi_n(K_0^c)=1$ for all $n\in\N$,
thus $\lim_{n\to\infty}\Pi_n(K_0^c)=1$.
\end{pf}

Many authors consider that few knowledge on the parameter is
represented by priors with large variance.
Here, we establish some links between the $q$-vague convergence of
priors and the convergence of the sequence of corresponding variances.

%
\begin{Prop}\label{divvar}
Let $\{\Pi_n\}_n$ be a sequence of probabilities on $\mathbb{R}$ such
that $\mathbb{E}_{\Pi_n}(\theta)$ is a constant.
If $\{\Pi_n\}_n$ converges $q$-vaguely to an improper prior $\Pi$
whose support is $\R$, then $\lim_{n\to\infty}\Var_{\Pi
_n}(\theta)= +\infty$.
\end{Prop}

\begin{pf}
Since $\E_{\Pi_n}(\theta)$ is constant, $\lim_{n\to\infty
}\Var_{\Pi_n}(\theta)=+\infty$
iff $\lim_{n\to\infty}\E_{\Pi_n}(\theta^2) = +\infty$.
For any $r>0$, we have $\mathbb{E}_{\Pi_n}(\theta^2)\geq\int_{[-r,r]^c}
\theta^2\,\dif\Pi_n(\theta)$
so $\mathbb{E}_{\Pi_n}(\theta^2) \geq r^2 \Pi_n([-r,r]^c)$.
From Proposition~\ref{lemcompact}, $\lim_{n\to\infty}\Pi
_n([-r,r]^c)=1$ and then $\lim_{n\to\infty}\mathbb{E}_{\Pi
_n}(\theta^2)\geq r^2$.
Since this holds for any $r>0$, $\lim_{n\to\infty}\mathbb
{E}_{\Pi_n}(\theta^2)=+\infty$.
\end{pf}

%
\begin{Cor}\label{cordivvar}
Let $\{\Pi_n\}_n$ be a sequence of probabilities with constant mean
which approximate the Lebesgue measure $\lambda_{\R}$.
Then, necessarily, $\lim_{n\to\infty}\Var_{\Pi_n}(\theta
) = +\infty$.
\end{Cor}

However, we will see in the examples in Section~\ref{sectionexemplesgammavar} that when we do not assume the expectation
to be constant; the variance does not necessarily diverge.

\subsection{Characterization of $q$-vague convergence}\label{cascontinu}

In this section, we establish several sufficient conditions for the
$q$-vague convergence of $\{\Pi_n\}_n$ to $\Pi$ through their
probability density function (p.d.f.).
When $\Theta$ is continuous, then $\pi_n$ and $\pi$ are the standard
p.d.f. with respect to the Lebesgue measure.
When $\Theta$ is discrete, then $\pi(\theta_0)=\Pi(\theta=\theta
_0)$, {that is}, $\pi$ is the p.d.f. with respect to the counting measure.

When $\Theta=\{\theta_i\}_{i\in I}$ is a discrete set with $I\subset
\mathbb N$, we give an easy-to-check characterization of the $q$-vague
convergence.

%
\begin{Prop}\label{caractconvmesdiscrete}
Let $\{\Pi_n\}_n$ and $\Pi$ be priors on $\Theta=\{\theta_i\}_{i\in
I}$, $I\subset\mathbb N$.
The sequence $\{\Pi_n\}_n$ converges $q$-vaguely to $\Pi$ iff
there exists a sequence of positive real numbers $\{a_n\}_n$ such that
for all $i\in I$, $\lim_{n\to\infty} a_n\pi_n(\theta
_i)=\pi(\theta_i)$.
\end{Prop}

\begin{pf}
It is a direct consequence of Definition~\ref{caractconv} applied to
the discreet case.
\end{pf}

Now, we consider the continuous case.

%
\begin{Prop} \label{corconvdensite}
Let $\{\Pi_n\}_n$ and $\Pi$ be continuous priors on $\Theta$ in $\R
$ or $\R^p$ with $p>1$.
Assume that:
\begin{longlist}[(2)]
\item[(1)] there exists a sequence of positive real numbers $\{a_n\}_{n}$ such
that the sequence $\{a_n\pi_n\}_{n}$ converges pointwise to $\pi$,

\item[(2)] there exists a continuous function $g\dvtx\Theta\rightarrow\mathbb
{R}^+$ and $N\in\mathbb N$ such that for all $n>N$ and $\theta\in
\Theta$, $a_n\pi_n(\theta) < g(\theta)$.
\end{longlist}
Then $\{\Pi_n\}_n$ converges $q$-vaguely to $\Pi$.
\end{Prop}

\begin{pf}
Let $h$ be in $\mathcal{C}_K(\Theta)$.
Then, $a_n h(\theta)\pi_n(\theta) \leq \llVert { h }
\rrVert g \mathds{1}_{K}(\theta)$
where $\llVert { h } \rrVert =\max_{\theta\in\Theta}
h(\theta)$.
Since $\llVert { h } \rrVert g \mathds{1}_{K}(\theta)$ is
Lebesgue integrable, by dominated convergence theorem,\break
$\lim_{n\to\infty} \int a_n \pi_n(\theta) \*h(\theta) \,\dif\theta= \int\pi
(\theta)h(\theta)\,\dif
\theta$.
\end{pf}

The following result will be useful to establish a result in
Section~\ref{section253}.

%
\begin{Prop} \label{thmconvdensite}
Let $\{\Pi_n\}_n$ and $\Pi$ be priors.
Assume that:
\begin{longlist}[(2$'$)]
\item[(1)] there exists a sequence of positive real numbers $\{a_n\}_{n}$ such
that the sequence $\{a_n\pi_n\}_{n}$ converges pointwise to $\pi$,

\item[(2$'$)] for any compact set $K$, there exists a scalar $M$ and some $N\in
\mathbb N$ such that for $n>N$, $\sup_{\theta\in K}a_n \pi_n(\theta
)<M$.
\end{longlist}
Then $\{\Pi_n\}_n$ converges $q$-vaguely to $\Pi$.
\end{Prop}

\begin{pf}
The proof is similar to the proof of Proposition~\ref{corconvdensite} with
$a_n \pi_n(\theta) h(\theta) \leq M \sup_{\theta\in
K} |h(\theta)| \mathds{1}_K(\theta)$.
\end{pf}

%
\begin{Rem}
Proposition~\ref{corconvdensite} and Proposition~\ref{thmconvdensite}
hold if $\pi(\theta)$ is the p.d.f. with respect to any
positive Radon measure.
\end{Rem}

\section{Convergence of posterior distributions and estimators}\label{section3}

Consider the model $X|\theta\sim P_{\theta}$, $\theta\in\Theta$.
We denote by $f(x\mid\theta)$ the likelihood.
The priors $\Pi_n$ on $\Theta$ represent our prior knowledge.
We always assume that $\int_{\Theta} f(x\mid\theta)\,\dif\Pi(\theta)>0$.

For a measure $\Pi$ and a measurable function $g$, we define the
measure $g\Pi$ by $g\Pi(f) = \Pi(gf) = \int f(\theta)
g(\theta) \,\mathrm{d}\Pi(\theta)$ for any $f$ whenever the integrals are defined;
$g\Pi$ is also denoted $g \,\mathrm{d}\Pi$ or $\Pi\circ g^{-1}$ by some authors.

In this paper, we define the posterior on $\theta$, $\Pi(\cdot\mid x)$,
by $\pi(\theta\mid x) \propto f(x\mid\theta) \pi(\theta)$.
Thus, the posterior $\Pi(\cdot\mid x)$ may be proper or improper.
There are three possible cases. First, if we use a proper prior, by
applying the Bayes formula, we obtain a posterior which is a
probability measure.
If the prior is an improper measure such that $\int_{\Theta
}f(x\mid\theta)\pi(\theta)\,\dif\theta< +\infty$, we can formally
apply the Bayes rule, which provides a posterior probability measure by
renormalization.
At last, if the prior is an improper measure such that $\int_{\Theta
}f(x\mid\theta)\pi(\theta)\,\dif\theta= +\infty$, the posterior
is an improper measure defined by $\pi(\theta\mid x) = f(x\mid\theta)
\pi(\theta)$
up to within a scalar factor.

In this section, we study the consequences of the $q$-vague convergence
of $\{\Pi_n\}_n$ on the posterior analysis.
In the general case where the posteriors may be proper or improper, we
give a result about the $q$-vague convergence of posteriors $\{\Pi
_n(\cdot\mid x)\}_n$ to $\Pi(\cdot\mid x)$.
When posteriors are probability measures, we can establish results
about the narrow convergence instead of the $q$-vague convergence.

%
\begin{Prop}\label{convpost}
Let $\{\Pi_n\}_n$ be a sequence of priors which converges $q$-vaguely
to $\Pi$.
Assume that, $\theta\longmapsto f(x\mid\theta)$ is a non-zero
continuous function on $\Theta$.
Then $\{\Pi_n(\cdot\mid x)\}_n$ converges $q$-vaguely to $\Pi(\cdot\mid x)$.

Moreover, if $\{\Pi_n(\cdot\mid x)\}_n$ is a tight sequence of
probabilities and $\Pi(\cdot\mid x)$ is a probability,
then $\{\Pi_n(\cdot\mid x)\}_n$ converges narrowly to $\Pi(\cdot\mid x)$.
\end{Prop}

\begin{pf}
Assume that $\{\Pi_n\}_n$ converges $q$-vaguely to $\Pi$.
From Definition~\ref{caractconv}, there exists a sequence of positive
scalars $\{a_n\}_n$ such that $\{a_n\Pi_n\}_n$ converges vaguely to
$\Pi$.
So, for any $h\in\mathcal{C}_K$, $\lim_{n\to\infty} a_n
\Pi_n(h) = \Pi(h)$.
Since $f(x\mid\cdot)$ is a continuous function, $f(x\mid\cdot)h\in
\mathcal
{C}_K$ and $\lim_{n\to\infty} a_n \Pi_n(f(x\mid\cdot)h) = \Pi(f(x\mid
\cdot)h)$. But $ \Pi_n(f(x\mid\cdot)h) = f(x\mid\cdot
)\Pi_n(h) $ and
$\Pi(f(x\mid\cdot)h) = f(x\mid\cdot)\Pi(h)$.
So, $\{a_n f(x\mid\cdot)\Pi_n\}$ converges vaguely to $f(x\mid\cdot
)\Pi$,
or equivalently $\{f(x\mid\cdot)\Pi_n\}_n$ converges $q$-vaguely to
$f(x\mid\cdot)\Pi$.

If $\{\Pi_n(\cdot\mid x)\}_n$ is a tight sequence of probabilities and
$\Pi(\cdot\mid x)$ is a probability, the second result follows from
Proposition~\ref{lienconvq-vetnarrow}.
\end{pf}

%
\begin{Rem}
If $\Theta$ is discrete, then $f(x\mid\theta)$ is necessary continuous
for the discrete topology.
\end{Rem}

The following results are based on Proposition~\ref{convpost} with
easier-to-check assumptions.

%
\begin{Cor}\label{suitecroissposterior}
Let $\{\Pi_n\}_n$ and $\Pi$ be priors.
Assume that:
\begin{longlist}[(2)]
\item[(1)] there exists a sequence of positive real numbers $\{a_n\}_{n}$ such
that the sequence $\{a_n\pi_n\}_{n}$ converges pointwise to $\pi
$,

\item[(2)] $\{a_n \pi_n(\theta)\}_n$ is non-decreasing for all $\theta\in
\Theta$,

\item[(3)] $\theta\longmapsto f(x\mid\theta)$ is continuous and
positive,

\item[(4)] all the posteriors $\Pi_n(\cdot\mid x)$ and $\Pi(\cdot\mid x)$ are
proper.
\end{longlist}
Then, $\{\Pi_n(\cdot\mid x)\}_n$ converges narrowly to $\Pi(\cdot\mid x)$.
\end{Cor}

\begin{pf}
The sequence $\{a_n f \pi_n\}_n$ is a non-decreasing sequence of
non-negative functions. By~monotone convergence theorem,
$ \lim_{n\to\infty} \int a_n f(x\mid\theta) \pi_n(\theta
) \,\dif\theta= \int\lim_{n\to\infty} a_n f(x\mid\theta) \*\pi
_n(\theta) \,\dif\theta=\int f(x\mid\theta) \pi(\theta) \,\dif\theta$.
So, $\{a_n\Pi_n(f)\}_n$ converges to $\Pi(f)>0$.
So there exists $N$ such that for all $n>N$, $a_n\Pi_n(f) \geq
\frac{1}{2}\Pi(f)$.
Consider $\{K_m\}_m$ an increasing sequence of compact sets such that
$\bigcup K_m=\Theta$.
The sequence $\{K_m^c\}_m$ decreases to $\varnothing$ so $\lim_{m\to
\infty}\Pi(f\mathds{1}_{K_m^c}) = 0$.
Thus, for all $\varepsilon>0$, there exists $M$ such that, for all
$m\geq M$, $\Pi(f\mathds{1}_{K_m^c})\leq\varepsilon$.
So, for all $n>N$, $\frac{f \Pi_n(K_M^c)}{\Pi_n(f)} = \frac{f a_n\Pi
_n(K_M^c)}{a_n\Pi_n(f)} \leq \frac{2 a_n\Pi_n(f\mathds{1}_{K_M^c})}{\Pi
(f)} \leq \frac{2\Pi(f\mathds{1}_{K_M^c})}{\Pi(f)} \leq \frac
{2\varepsilon}{\Pi(f)}$.
The second inequality comes from assumption~(3).
Thus, $\{\frac{f \Pi_n}{\Pi_n(f)}\}_n$ is tight.
The result follows from Proposition~\ref{lienconvq-vetnarrow}.
\end{pf}

%
\begin{Cor}\label{convdompost}
Let $\{\Pi_n\}_n$ and $\Pi$ be priors.
Assume that:

(1) there exists a sequence of positive real numbers $\{a_n\}_{n}$ such
that the sequence $\{a_n\pi_n\}_{n}$ converges pointwise to $\pi
$,

(2) there exists a continuous function $g\dvtx\Theta\rightarrow\mathbb
{R}^+$ such that $fg$ is Lebesgue integrable and for all $n\in\N$ and
$\theta\in\Theta$, $a_n\pi_n(\theta)<g(\theta)$,

(3) $\theta\longmapsto f(x\mid\theta)$ is continuous and
positive,

(4) all the posteriors $\Pi_n(\cdot\mid x)$ and $\Pi(\cdot\mid x)$ are
proper.

Then $\{\Pi_n(\cdot\mid x)\}_n$ converges narrowly to $\Pi(\cdot\mid x)$.
\end{Cor}

\begin{pf}
From Proposition~\ref{corconvdensite}, assumptions (1) and (2) imply
that $\{\Pi_n\}_n$ converges \mbox{$q$-}va\-guely to $\Pi$.
From assumption (2), for all $n$, $a_n f(x\mid\theta) \pi_n(\theta
)\leq f(x\mid\theta)g(\theta)$.
Since $fg$ is Lebesgue integrable, by dominated convergence theorem,
$ \lim_{n\to\infty}\int a_n f(x\mid\theta) \pi_n(\theta
) \,\dif\theta= \int\lim_{n\to\infty} a_n f(x\mid\theta) \pi_n(\theta)
\,\dif\theta=\int f(x\mid\theta) \pi(\theta) \,\dif\theta$.
Thus, $\{a_n\Pi_n(f)\}_n$ converges to $\Pi(f)>0$ so there exists $N$
such that for all $n>N$,
$a_n\Pi_n(f) \geq\frac{1}{2}\Pi(f)$.
Consider $\{K_m\}_{m\in\N}$ an increasing sequence of compact sets
such that $\bigcup K_m = \Theta$.
The sequence $\{K_m^c\}_{m\in\N}$ decreases to $\varnothing$ so
$\lim_{m\to\infty}\lambda(fg\mathds{1}_{K_m^c})=0$.
Thus, for all $\varepsilon>0$, there exists $M$ such that for all
$m\geq M$, $\lambda(fg\mathds{1}_{K_m^c})\leq\varepsilon$.
So,\vspace*{2pt} for all $n>N$,
$\frac{f a_n\Pi_n(K_M^c)}{a_n\Pi_n(f)} \leq \frac{2 a_n\Pi_n(f\mathds
{1}_{K_M^c})}{\Pi(f)} \leq \frac{2\lambda(fg\mathds{1}_{K_M^c})}{\Pi
(f)} \leq \frac{2\varepsilon}{\Pi(f)}$.
Thus, $\{\Pi_n(\cdot\mid x)\}_n$ is a tight sequence of probabilities.
The result follows from Proposition~\ref{convpost}.
\end{pf}

The following result will be useful to explain the Jeffreys--Lindley
paradox (see Section~\ref{section5}).

%
\begin{Cor}\label{convposteriorquandpriorconvvague}
Consider a sequence of probabilities $\{\Pi_n\}_n$ which converges
vaguely to the proper measure $\Pi$.
Assume that:

\begin{longlist}[(2)]
\item[(1)] $\theta\longmapsto f(x\mid\theta)$ is continuous and
non-negative,

\item[(2)] $f(x\mid\cdot)\in\mathcal{C}_0(\Theta)$.
\end{longlist}
Then $\{\Pi_n(\cdot\mid x)\}_n$ converges narrowly to $\Pi(\cdot\mid x)$.
\end{Cor}

\begin{pf}
Since the $\Pi_n$ and $\Pi$ are proper measures and $f(\cdot\mid\theta
)$ is a p.d.f., $\Pi_n(\cdot\mid x)$ and $\Pi(\cdot\mid x)$ are probabilities.
We assume that $\{\Pi_n\}_n$ converges vaguely, and so $q$-vaguely, to
$\Pi$ and that $f$ satisfies~(1).
So, from Proposition~\ref{convpost}, $\{\Pi_n(\cdot\mid x)\}_n$
converges $q$-vaguely to $\Pi(\cdot\mid x)$.
From Lemma~\ref{lienconvvagueconvetroite}, $\{\Pi_n(f)\}_n$
converges to $\Pi(f)$.
So, there exists $N$ such that for $n>N$, $\Pi_n(f)>\frac{\Pi(f)}{2}$.
Moreover, from assumption (2), for all $\varepsilon>0$, there exists a
compact $K$ such that for all $\theta\in K^c$, $f(\theta\mid x)\leq
\varepsilon$.
Thus, for all $n>N$,
$\frac{f\Pi_n(K^c)}{\Pi_n(f)} \leq \frac{2\Pi_n(f\mathds{1}_{K^c})}{\Pi
(f)} \leq \frac{2\varepsilon}{\Pi(f)}$.
Thus, $\{\frac{f \Pi_n}{\Pi_n(f)}\}_n$ is tight.
The result follows from Proposition~\ref{convpost}.
\end{pf}

Now, we establish some links between the $q$-vague convergence of $\{
\Pi_n\}_n$ and the convergence of the Bayes estimates $\mathbb
{E}_{\Pi_n}(\theta\mid x)$.

%
\begin{Prop}\label{convestim}
Let $\{\Pi_n\}_n$ be a sequence of priors which converges $q$-vaguely
to $\Pi$.
Assume that:
\begin{longlist}[(2)]
\item[(1)] $\theta\longmapsto f(x\mid\theta)$ is a non-zero continuous function
on $\Theta$,

\item[(2)] the family $\{\Pi_n(\cdot\mid x)\}_n$ is a family of probabilities
uniformly integrable (see Billingsley \cite{MR0233396}, page~32).
\end{longlist}
Then $\lim_{n\to\infty}\mathbb{E}_{\Pi_n}(\theta\mid x)=
\mathbb{E}_{\Pi}(\theta\mid x)$.
\end{Prop}

\begin{pf}
From Proposition~\ref{convpost}, $\{\Pi_n(\theta\mid x)\}_n$ converges
$q$-vaguely to $\Pi(\theta\mid x)$.
For all $n$, $\Pi_n(\cdot\mid x)$ and $\Pi(\cdot\mid x)$ are probability
measures and $\{\Pi_n(\cdot\mid x)\}_n$ uniformly integrable implies that
$\{\Pi_n(\cdot\mid x)\}_n$ is tight.
So, from Proposition~\ref{convpost}, $\{\Pi_n(\theta\mid x)\}_n$
converges narrowly to $\Pi(\theta\mid x)$.
The result follows from Billingsley (\cite{MR0233396}, Theorem 5.4).
\end{pf}

We give an other version of Proposition~\ref{convestim} with a more
restrictive but easier-to-check condition than uniform integrability.

%
\begin{Cor}\label{convestimvar}
Let $\{\Pi_n\}_n$ be a sequence of priors which converges $q$-vaguely
to $\Pi$.
Assume that $\theta\longmapsto f(x\mid\theta)$ is a non-zero
continuous function on $\Theta$, and that $\{\Pi_n(\cdot\mid x)\}_n$ is
a family of probabilities such that
$\{\Var_{\Pi_n}(\theta\mid x)\}_n$ is bounded above.
Then $\lim_{n\to\infty}\mathbb{E}_{\Pi_n}(\theta\mid x) = \mathbb
{E}_{\Pi}(\theta\mid x)$.
\end{Cor}

\begin{pf}
This is a consequence of Billingsley (\cite{MR0233396}, page~32) and Proposition~\ref{convestim}.
\end{pf}

\section{Some constructions of sequences of vague priors}\label{nvellesection3}

In this section, we give some constructions of sequences of probability
measures that approximate a given improper prior such as the Haar
measures or the Jeffreys prior.
We have shown in the proof of Proposition~\ref{espquotient} that any
improper prior may be approximated by truncation. Here, we give other
constructions for the Haar measure or the Jeffreys prior.

\subsection{Location and scale models}\label{subsubsectionlocation}

The parameter $\theta$ is said to be a location parameter if there
exists a p.d.f. $g$ such that $f(x\mid\theta)=g(x-\theta)$.
For instance, it is the case when $X|\theta\sim\mathcal{N}(\theta
,\sigma^2)$ with known $\sigma^2$.
The underlying group is $(\R,+)$ and the Haar measure $\lambda_{\R}$
is improper.

%
\begin{Prop}\label{HaarsurR,+}
Let $\Pi$ be a continuous probability measure on $\R$.
Assume that the p.d.f. $\pi(\theta)$ of $\Pi$ with respect to the
Lebesgue measure $\lambda_{\R}$ is bounded above by a continuous and
increasing function and is continuous at $\theta= 0$ with $\pi(0)>0$.
We define $\Pi_n$ by $\pi_n(\theta) = \frac{1}{n}\pi(\frac
{\theta}{n})$.
Then, $\{\Pi_n\}_{n>0}$ converges $q$-vaguely to $\lambda_{\R}$.
\end{Prop}

\begin{pf}
Put $\pi_n(\theta)=\frac{1}{n}\pi(\frac{\theta}{n})$. Put
$a_n=n$, then $\lim_{n\to\infty} a_n\pi_n(\theta) = \lim_{n\to\infty}\pi
(\frac{\theta}{n}) = \pi(0)>0$
since $\pi$ is continuous at $0$.
Moreover, $\pi$ is bounded above by a continuous and increasing
function, so there exists $g$ such that,
for all $\theta\in\R$ and for all $n>0$, $\pi(\frac{\theta}{n}) \leq
g(\frac{\theta}{n}) \leq g(\theta)$.
The result follows from Proposition~\ref{corconvdensite}.
\end{pf}

We note that Hartigan \cite{MR1389885} used a dual approach. He reduced the
influence of the prior by letting the conditional variance $\sigma^2$
reducing to $0$.
He arrived at similar conclusions.
He assumed that $\Pi$ is locally uniform at $0$, but it is equivalent
to assuming that $\Pi$ is continuous and positive at $0$.
We replace his condition ``$\pi$ tail-bounded'' by the condition
``$\pi$ bounded''.

%
\begin{Rem}
Proposition~\ref{HaarsurR,+} holds with the assumption ``$\pi$
bounded'' instead of ``$\pi$ bounded above by a continuous and
increasing function''.
\end{Rem}

We now study the scale model.
The strictly positive parameter $\sigma$ is said to be a scale parameter
if $f(x|\sigma)=\frac{1}{\sigma}g(\frac{x}{\sigma})$ where $g$ is
a p.d.f.
If $\sigma$ is a scale parameter, $\log(\sigma)$ is a location
parameter for $\log(X)$.
Here, the concerned group is $(\R^+\setminus\{0\},\times)$ and the
Haar measure $\frac{1}{\sigma}\lambda_{\R^+\setminus\{0\}}$ is improper.
The following proposition is the equivalent of Proposition~\ref
{HaarsurR,+} for the Haar measure on $(\R^+\setminus\{0\},\times)$.

%
\begin{Cor}\label{propscalemodel}
Let $\Pi$ be a continuous probability measure on $\R^+\setminus\{0\}$.
Assume that the p.d.f. $\pi(\sigma)$ of $\Pi$ with respect to the
Lebesgue measure $\lambda_{\R^+\setminus\{0\}}$ is bounded above by
a continuous and increasing function\vspace*{1pt} and is continuous at $\sigma=1$
with $\pi(1)>0$.
We define $\Pi_n$ by $\pi_n(\sigma) = \frac{1}{n}\sigma^{\sfrac
{1}{n}-1}\pi(\sigma^{\sfrac{1}{n}})$.
Then $\{\Pi_n\}_{n>0}$ converges $q$-vaguely to $\frac{1}{\sigma
}\lambda_{\R^+\setminus\{0\}}$.
\end{Cor}

\begin{pf}
Put $\theta= \log(\sigma)$.
From Proposition~\ref{chgmtparam}, $\widetilde{\pi}(\theta) = \ee^{\theta
}\pi(\ee^{\theta})$ which is bounded above by the continuous
and increasing function $\ee^{\theta}g(\ee^{\theta})$.
The result follows from Proposition~\ref{HaarsurR,+}.
\end{pf}

\subsection{Jeffreys conjugate priors (JCPs)}\label{section253}
The Jeffreys prior is one of the most popular prior when no information
is available, but in many cases, is improper. Consider that the
distribution $X|\theta$ belongs to an exponential family,
{i.e.}, $f(x\mid\theta)=\exp\{\theta\cdot t(x)-\phi(\theta)\}
h(x)$, for some functions $t(x)$, $h(x)$ and $\phi(\theta)$, and
$\theta\in\Theta$, where $\Theta$ is an open set in $\mathbb R^p$,
$p\geq1$,
such that $f(x\mid\theta)$ is a well-defined p.d.f.
We assume that $\phi(\theta)$ and $I_\theta(\theta)$ are
continuous. These conditions are satisfied if $t(X)$ is not
concentrated on an hyperplane almost surely (see Barndorff-Nielsen \cite{MR489333}).
Druilhet and Pommeret \cite{MR3000019} proposed a class of conjugate priors that
aims to approximate the Jeffreys prior and that is invariant with
respect to smooth reparameterization.
The notion of approximation was defined only from an intuitive point of
view. We can now give a more rigorous approach by using the $q$-vague
convergence.

Denote by $\pi^J(\theta)=|I_\theta(\theta)|^{1/2}$ the p.d.f. of the
Jeffreys prior with respect to the Lebesgue measure,
where $\theta$ is the natural parameter of the exponential family and
$I_{\theta}(\theta)$ is the determinant of the Fisher information
matrix. The JCPs are defined through their p.d.f. with respect to the
Lebesgue measure by
\[
\pi^J_{\alpha,\beta}(\theta)\propto\exp\bigl\{\alpha.\theta-\beta
\phi(\theta)\bigr\} \bigl\llvert I_{\theta}(\theta)\bigr\rrvert
^{\sfrac{1}{2}},
\]
and for a smooth reparameterization $\theta\rightarrow\eta$ by
\[
\pi^J_{\alpha,\beta}(\eta)\propto\exp\bigl\{\alpha.\theta(\eta)-
\beta\phi\bigl(\theta(\eta)\bigr)\bigr\} \bigl\llvert I_{\eta}(\eta
)\bigr
\rrvert^{\sfrac{1}{2}}.
\]

%
\begin{Prop}
\label{propfamilleexpo}
Let $\{(\alpha_n,\beta_n)\}_n$ be a sequence of real numbers that
converges to $(0,0)$. Then, for the natural parameter $\theta$ or for
any smooth reparameterization $\eta$,
$\{\Pi^J_{\alpha_n,\beta_n}\}_n$ converges \mbox{$q$-}vaguely to $\Pi^J$.
\end{Prop}

\begin{pf}
Choose $\{a_n\}_n$ such that $a_n \pi^J_{\alpha_n,\beta_n}(\theta
)=\exp\{\alpha_n \theta-\beta_n\phi(\theta)\} |I_{\theta
}(\theta)|^{\sfrac{1}{2}}$, which converges pointwise to $|I_{\theta
}(\theta)|^{\sfrac{1}{2}}$.
Put $\gamma_n=(\alpha_n,\beta_n)$ and $\psi(\theta) = (\theta
,-\phi(\theta))$. We have $\gamma_n\cdot\psi(\theta)=\alpha_n
\theta-\beta_n\phi(\theta)$. By Cauchy--Schwarz
inequality, $\gamma_n\cdot\psi(\theta) \leq\llVert \gamma_n\rrVert
\llVert
\psi(\theta)\rrVert $. Since $\gamma_n$ converges to $(0,0)$, there exists
$N$ such that, for $n>N$, $\llVert \gamma_n\rrVert <1$. Let $K$ be a
compact set
in $\Theta$,
by continuity of $\psi(\theta)$, since $\phi(\theta)$ is
continuous, and by continuity of $I_{\theta}(\theta)$, there exist
$M_1$ and $M_2$
such that, for all $\theta\in K$, $\llVert \psi(\theta)\rrVert
<M_1$ and
$|I_{\theta}(\theta)|^{\sfrac{1}{2}}<M_2$. Therefore, $a_n \pi
^J_{\alpha_n,\beta_n}(\theta)\leq M_2 \exp\{M_1\}$.
The result follows from Proposition~\ref{thmconvdensite}.
\end{pf}

Even if we have the convergence to the Jeffreys prior, we have no
guaranty that $\Pi^J_{\alpha_n,\beta_n}$ is a proper prior and there
is no general result to characterize this property
such as in Diaconis and Ylvisaker \cite{MR520238} for usual conjugate priors.
For example, consider inverse Gaussian models with likelihood $ f(x;\mu
,\lambda)= (\frac{\lambda}{2\uppi x^3} )^{\sfrac{1}{2}}\exp
(\frac{-\lambda(x-\mu)^2}{2\mu^2x} )\mathds{1}_{\{x>0\}}$
where $\mu>0$ denotes the mean parameter and $\lambda>0$ stands for
the shape parameter.
Considering the parameterization $ (\psi=\frac{1}{\mu},\lambda
)$, the JCPs are given by
$\pi^J_{\alpha,\beta}(\psi,\lambda) \propto \ee^{-\sklfrac{\lambda}{2}(\alpha_1\psi^2-2\beta\psi+\alpha_2)} \psi^{-\sfrac
{1}{2}}\lambda^{\sfrac{(\beta-1)}{2}}$.
Druilhet and Pommeret \cite{MR3000019} showed that $\pi^J_{\alpha,\beta}(\psi
,\lambda)$ is proper iff $\alpha_1>0$, $\alpha_2>0$ and $-\frac
{1}{2}\leq\beta<\sqrt{\alpha_1\alpha_2}$.
So, we may consider the sequences $\alpha_{1,n}=\alpha_{2,n}=\frac
{1}{n}$ and $\beta_{n}=\frac{1}{2n}$. By\vspace*{1pt} Proposition~\ref{propfamilleexpo},
$\Pi^J_{\alpha_n,\beta_n}(\psi,\lambda)$ is therefore a sequence
of proper priors that converges $q$-vaguely to the Jeffreys prior $ \Pi^J$.

%
\begin{Rem}
For any continuous function $g$ on $\Theta$, we can define $\pi
^g_{\alpha,\beta} (\theta) \propto\exp\{\alpha.\theta- \beta\phi
(\theta)\} g(\theta)$ and
$\pi^g(\theta) = g(\theta)$.
Similarly to Proposition~\ref{propfamilleexpo}, it can be shown that
$\{\Pi^g_{\alpha_n,\beta_n}\}$ converges $q$-vaguely to $\Pi^g$.
\end{Rem}

\section{Some examples}\label{sectionexemples}

In this section, we consider some usual distributions and we look at
the $q$-vague limiting measure.

\subsection{Approximation of flat prior from Uniform distributions}
\subsubsection{The discrete case}

Consider $\Theta=\N$, and $\Pi_n=\mathcal{U}(\{0,1,\ldots,n\})$
the Uniform distribution on $\{0,\ldots,n\}$.
Then $\{\Pi_n\}_n$ converges $q$-vaguely to the counting measure.

Indeed, $\pi_n(\theta) = \frac{1}{n+1} \mathds{1}_{\{
0,1,\ldots,n\}}(\theta)$.
Put $a_n = n+1$, then, for $\theta\in\N$, $\lim_{n\to\infty} a_n\pi
_n(\theta) = \lim_{n\to\infty
} \mathds{1}_{\{0,1,\ldots,n\}}(\theta) = 1$.
The result follows from Proposition~\ref{caractconvmesdiscrete}.

\subsubsection{The continuous case}

Let $\Theta=\mathbb R$, and $\Pi_n=\mathcal{U}([-n,n])$ the Uniform
distribution on $[-n,n]$.
Then $\{\Pi_n\}_n$ converges $q$-vaguely to the Lebesgue measure
$\lambda_{\mathbb{R}}$.

It corresponds to a location model so the result follows from
Proposition~\ref{HaarsurR,+} with $\Pi= \mathcal{U}([-1,1])$.

\subsection{Poisson distribution}

Here is an example where a family of proper priors does not converge
$q$-vaguely.
Let $\Theta=\N$ and $\Pi_n$ be the Poisson distribution with $\pi
_n(\theta)=\exp(-n)\frac{n^{\theta}}{\theta!}$.
Assume that there exists $\Pi$ such that $\{\Pi_n\}_n$ converges
$q$-vaguely to $\Pi$. Then, from Proposition~\ref{caractconvmesdiscrete},
there exists a sequence $\{a_n\}_n$ such that for all $\theta\in
\Theta$, $\lim_{n\to\infty} a_n \pi_n(\theta) = \pi(\theta)$.
Consider\vspace*{2pt} $\theta_0\in\Theta$ such that $\pi(\theta_0)>0$. There
exists $N$ such that, for all $n>N$, $\pi_n(\theta_0)>0$.
Consider $\theta>\theta_0$, for all $n>N$, $\frac{\pi_n(\theta
)}{\pi_n(\theta_0)}=\frac{\theta_0!}{\theta!}n^{\theta-\theta_0}$
and $\lim_{n\to\infty}\frac{\pi_n(\theta)}{\pi_n(\theta
_0)} = \frac{\pi(\theta)}{\pi(\theta_0)} < +\infty$.
On\vspace*{2pt} the other side, $\lim_{n\to\infty}\frac{\theta
_0!}{\theta!}n^{\theta-\theta_0} = +\infty$.
This is a contradiction. So, there is no prior $\Pi$ such that $\{\Pi
_n\}_n$ converges $q$-vaguely to $\Pi$.

\subsection{Normal distribution}\label{exnormaledistrib}

Let $\Theta=\mathbb R$ and $ \Pi_n=\mathcal{N}(0,n)$ the normal
distribution with zero mean and variance equal to $n$.
Then $\{\Pi_n\}_n$ converges $q$-vaguely to the Lebesgue measure on
$\R$.

Indeed, $\pi_n(\theta)=\frac{1}{\sqrt{2\uppi n}}\ee^{-\sfrac{\theta
^2}{2 n}}$ and $\pi(\theta)=1$.
Put $a_n=\sqrt{2\uppi n}$, $n>0$.
Then $\{a_n \pi_n\}_{n>0}$ converges pointwise to $1$.
Moreover, for all $n$ and all $\theta$, $a_n\pi_n(\theta)<2$.
The result follows from Proposition~\ref{corconvdensite}.

%
\begin{Rem}\label{remarqueunicitelimite+divvar}
From Theorem~\ref{unicitelimite}, $\{\mathcal{N}(0,n)\}_{n>0}$
cannot converge to another limiting measure than the Lebesgue measure
(up to within a scalar factor).
\end{Rem}

More generally, it can be shown that the limiting measure is the same
for $\{\mathcal{N}(\mu_n,n)\}_{n}$ where $\{\mu_n\}_n$ is a constant
or a bounded sequence.
So, we consider now the case where $\lim_{n\to\infty}\mu_n = +\infty$
by taking $\mu_n=n$.

%
\begin{Prop}
We have three cases for the convergence of $\mathcal{N}(n,\sigma_n^2)$:
\begin{longlist}[2.]
\item[1.] If $\lim_{n\to\infty}\frac{n}{\sigma_n^2} = +\infty$, then $\{
\mathcal{N}(n,\sigma_n^2)\}_n$ does not converge
$q$-vaguely.

\item[2.] If $\lim_{n\to\infty}\frac{n}{\sigma_n^2} = c$
with $0<c<\infty$, then $\{\mathcal{N}(n,\sigma_n^2)\}_n$ converges
$q$-vaguely to $\ee^{c\theta}\,\dif\theta$.

\item[3.] If $\lim_{n\to\infty}\frac{n}{\sigma_n^2} = 0$,
then $\{\mathcal{N}(n,\sigma_n^2)\}_n$ converges $q$-vaguely to
$\lambda_{\R}$.
\end{longlist}
\end{Prop}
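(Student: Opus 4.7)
The plan is to rewrite the Gaussian density in a form that isolates the only part that matters in the limit. Expanding the exponent and absorbing the $\theta$-independent factor into the normalization, take $a_n = \sigma_n\sqrt{2\pi}\,\exp(n^2/(2\sigma_n^2))$ so that
\[
a_n\,\pi_n(\theta) \;=\; \exp\!\left(-\frac{\theta^2}{2\sigma_n^2} + \frac{n\theta}{\sigma_n^2}\right).
\]
All three cases are then governed by the asymptotics of $n/\sigma_n^2$.

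For case 2, note that $n/\sigma_n^2\to c\in(0,\infty)$ together with $n\to\infty$ forces $\sigma_n^2\to\infty$; hence $\theta^2/\sigma_n^2\to 0$ and $n\theta/\sigma_n^2\to c\theta$ pointwise, so $a_n\pi_n(\theta)\to e^{c\theta}$. For local domination, fix a compact $K\subset[-M,M]$ and $n$ large enough that $n/\sigma_n^2\leq c+1$: then $a_n\pi_n(\theta)\leq\exp(|n\theta|/\sigma_n^2)\leq\exp(M(c+1))$ on $K$. Theorem \ref{thm conv densite} then yields $q$-vague convergence to $e^{c\theta}\,\dif\theta$. Case 3 is the same argument with $c=0$ and pointwise limit $1$, giving the Lebesgue measure $\lambda_{\R}$.

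For case 1, argue by contradiction: suppose $\{\Pi_n\}$ converges $q$-vaguely to some non-zero $\Pi\in\mathcal{R}$. By Proposition \ref{caract conv}, there exist $b_n>0$ with $b_n\Pi_n\to\Pi$ vaguely. Choose $h_0\in\mathcal{C}_K^+$ supported in some $[-A,A]$ with $\Pi(h_0)>0$, and for $L>0$ set $h_L(\theta)=h_0(\theta-L)\in\mathcal{C}_K^+$. A direct computation gives
\[
\frac{\pi_n(u+L)}{\pi_n(u)} \;=\; \exp\!\left(\frac{L\,(n-u-L/2)}{\sigma_n^2}\right),
\]
which, for $u\in[-A,A]$ and $n$ large, is bounded below by $\exp\bigl(L(n-A-L/2)/\sigma_n^2\bigr)\to+\infty$ since $n/\sigma_n^2\to+\infty$. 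Integrating this inequality against $h_0$ gives $\Pi_n(h_L)/\Pi_n(h_0)\to+\infty$; multiplying by $b_n\Pi_n(h_0)\to\Pi(h_0)>0$ yields $b_n\Pi_n(h_L)\to+\infty$, contradicting $b_n\Pi_n(h_L)\to\Pi(h_L)<\infty$.

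The main obstacle is case 1, since cases 2 and 3 are routine applications of Theorem \ref{thm conv densite}. The delicate point is to prove the \emph{non-existence} of any $q$-vague limit, and the key idea is to exploit translation: any bump function shifted rightward by a positive amount accumulates exponentially more $\Pi_n$-mass than the original, so no rescaling $b_n$ can reconcile the two ratios and no Radon limit can survive.
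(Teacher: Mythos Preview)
Your argument is correct and follows the same overall strategy as the paper: rewrite the density as $a_n\pi_n(\theta)=\exp\bigl(-\theta^2/(2\sigma_n^2)+n\theta/\sigma_n^2\bigr)$, read off the pointwise limit from the behaviour of $n/\sigma_n^2$, and in case~1 obtain a contradiction by comparing the mass on two sets, one lying to the right of the other.

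The execution differs in two minor but genuine ways. In case~1 the paper compares $\widetilde{\Pi}_n$ on two disjoint intervals $[A_1,A_2]$ and $[B_1,B_2]$ with $A_2<B_1$, bounding the ratio of the interval masses below by an expression that diverges; you instead work directly with a test function $h_0\in\mathcal{C}_K^+$ and its translate $h_L$, computing $\pi_n(u+L)/\pi_n(u)$ explicitly. Your version stays within the native language of vague convergence (integration against $\mathcal{C}_K$ functions) and therefore sidesteps the small technicality of passing from vague convergence to convergence of interval masses. In cases~2 and~3 the paper invokes Corollary~\ref{cor conv densite} with the global continuous dominant $g(\theta)=\exp((c+\varepsilon)\theta)$, whereas you bound $a_n\pi_n$ by a constant on each compact set and appeal to Theorem~\ref{thm conv densite}; this avoids the slight awkwardness that $\exp((c+\varepsilon)\theta)$ does not quite dominate $a_n\pi_n(\theta)$ for $\theta<0$ without adjustment. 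Both routes are sound, and the differences are cosmetic rather than structural.
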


\begin{pf}
For all $n>0$, we denote by $\Pi_n=\mathcal{N}(n,\sigma_n^2)$, and
by $\pi_n$ the p.d.f. with respect to the Lebesgue measure,
$\pi_n(\theta)=\frac{1}{\sqrt{2\uppi}\sigma_n}\exp(-\frac{(\theta
-n)^2}{2\sigma_n^2})$.
\begin{longlist}[2.]
\item[1.] Put $\widetilde{\pi}_n(\theta)=\exp(-\frac{\theta
^2}{2\sigma_n^2}+\frac{\theta n}{\sigma_n^2} )$ and
$\widetilde{\pi}(\theta)=\ee^{\afrac{n^2}{2\sigma_n^2}}\pi(\theta)$.
So $\{\Pi_n\}_n$ converges $q$-vaguely iff $\{\widetilde{\Pi}_n\}_n$
converges $q$-vaguely.
Assume\vspace*{2pt} that there exists $\widetilde{\Pi}$ such that $\{\widetilde
{\Pi}_n\}_n$ converges $q$-vaguely to~$\widetilde{\Pi}$.
Then there exists a sequence $\{a_n\}_n$ such that $\{a_n\widetilde
{\Pi}_n\}_n$ converges vaguely\vspace*{2pt} to $\widetilde{\Pi}$.
Since $\widetilde{\Pi}\neq0$, there exists an interval $[A_1,A_2]$
such that $-\infty<A_1<A_2<+\infty$ and $0<\widetilde{\Pi
}([A_1,A_2])<+\infty$.
Consider $[B_1,B_2]$ such that $A_2<B_1<B_2<+\infty$.
There exists $N$ such that for $n>N$, $\theta\longmapsto-\frac
{\theta^2}{2n}+\frac{\theta n}{\sigma_n^2}$ is non-decreasing.
For a such $n$, $\widetilde{\Pi}_n([B_1,B_2]) \geq(B_2-B_1) \exp(-\frac
{B_1}{2\sigma_n^2}+\frac{B_1 n}{\sigma
_n^2})$ and
$\widetilde{\Pi}_n([A_1,A_2]) \leq (A_2-A_1)\exp(-\frac
{A_2}{2\sigma_n^2}+\frac{A_2 n}{\sigma_n^2})$.
So $\frac{\widetilde{\Pi}_n([B_1,B_2])}{\widetilde{\Pi
}_n([A_1,A_2])} \geq\frac{B_2-B_1}{A_2-A_1}\exp(C(n))$
with $C(n) = \frac{n (B_1-A_2)}{\sigma_n^2}-\frac
{(B^2_1-A_2^2)}{2\sigma_n^2} \geq\frac{n(B_1-A_2)}{2\sigma_n^2}$.
Thus, $\lim_{n\to\infty} \frac{\widetilde{\Pi
}_n([B_1,B_2])}{\widetilde{\Pi}_n([A_1,A_2])} = +\infty$ but
$\lim_{n\to\infty} \frac{\widetilde{\Pi
}_n([B_1,B_2])}{\widetilde{\Pi}_n([A_1,A_2])} = \frac{\widetilde
{\Pi}([B_1,B_2])}{\widetilde{\Pi}([A_1,A_2])} < +\infty$.
So, $\{\Pi_n\}_n$ does not converge $q$-vaguely.

\item[2.] Put\vspace*{1pt} $a_n=\frac{1}{\sqrt{2\uppi}\sigma_n}\exp(-\frac
{n^2}{2\sigma_n^2})$.
Then $\lim_{n\to\infty} a_n\pi_n(\theta) = \lim_{n\to\infty}\exp(-\frac
{\theta^2}{2\sigma_n^2}+\frac
{\theta n}{\sigma_n^2}) = \ee^{c\theta}$.
Since $\lim_{n\to\infty}\frac{n}{\sigma_n^2} = c$,
there exists $N$ such that for all $n>N$, $\frac{n}{\sigma_n^2}\in
[c-\varepsilon,c+\varepsilon]$.
So,\vspace*{1pt} for all $n>N$, $\exp(-\frac{\theta^2}{2\sigma_n^2}+\frac
{\theta n}{\sigma_n^2})\leq\exp((c+\varepsilon)\theta)$ which
is continuous.
The result follows from Proposition~\ref{corconvdensite}.

\item[3.] This is the same reasoning as point~2. with $\lim_{n\to
\infty} a_n\pi_n(\theta) = 1$ and $a_n\pi_n(\theta)\leq
1+\varepsilon$ for all $n>N$ and $N$ large enough.\quad\qed
\end{longlist}\noqed
\end{pf}

%
\begin{Ex}\label{exnormalepost}
Assume that $X|\theta\sim\mathcal{N}(\theta,\sigma^2)$,
$\sigma^2$ known, and put the prior $\Pi_n = \mathcal{N}(0,n)$
on $\theta$.
Then $\Pi_n(\theta\mid x) = \mathcal{N}(\frac{n x}{\sigma
^2+n},\frac{\sigma^2 n}{\sigma^2+n})$.
From\vspace*{2pt} Section~\ref{exnormaledistrib}, the two first hypotheses are
satisfied and $\{\mathcal{N}(0,n)\}_n$ converges $q$-vaguely to the
Lebesgue measure $\lambda_{\R}$ so here,
$\Pi= \lambda_{\R}$.
Moreover, $\theta\longmapsto f(x\mid\theta)$ is continuous and
positive on $\Theta$ and $\Pi(\cdot\mid x) = \mathcal{N}(x,\sigma
^2)$ is proper.
So, from Theorem~\ref{suitecroissposterior}, $\{\mathcal{N}(\frac{n
x}{\sigma^2+n},\frac{\sigma^2 n}{\sigma^2+n})\}_n$ converges
narrowly to $\mathcal{N}(x,\sigma^2)$.
\end{Ex}

%
\begin{Ex}
To continue\vspace*{2pt} Example~\ref{exnormalepost}, $\Var_{\Pi_n}(\theta\mid x) =
\frac{\sigma^2 n}{\sigma^2+n}$ is bounded above by $\sigma^2$
and the other hypothesis of Proposition~\ref{convestimvar} have
already been verified in Example~\ref{exnormalepost}. So, from
Proposition~\ref{convestimvar},
$\lim_{n\to\infty}\E_{\Pi_n}(\theta\mid x) = \E_{\Pi
}(\theta)$.
Indeed, $\lim_{n\to\infty}\E_{\Pi_n}(\theta) = \lim_{n\to\infty}\frac{n
x}{\sigma^2+n} = x = \E_{\Pi
}(\theta)$.
\end{Ex}

\subsection{Gamma distribution}

\subsubsection{Approximation of \texorpdfstring{$\Pi=\frac{1}{\theta}\mathds{1}_{\theta>0}\,\dif\theta$}{$Pi=\frac{1}{theta}\mathds{1}_{theta>0}dtheta$}}\label{sectionexemplesgammavar}
Let $\Theta= \mathbb R_+$ and $\Pi_n = \gamma(\alpha
_n,\beta_n)$ the Gamma distributions with $\lim_{n\to\infty
}(\alpha_n,\beta_n) = (0,0)$.
We have $\pi_n(\theta) = \frac{{\beta_n}^{\alpha_n}}{\Gamma
(\alpha_n)} \theta^{\alpha_n-1} \ee^{-\beta_n \theta}$.
Put $a_n=\frac{\Gamma(\alpha_n)}{{\beta_n}^{\alpha_n}}$. Then
$a_n\pi_n(\theta) = \theta^{\alpha_n-1} \ee^{-\beta_n \theta
}$ and $\{a_n\pi_n(\theta)\}_n$ converges to $\frac{1}{\theta}$.
Put $g(\theta) = \frac{1}{\theta} \mathds{1}_{]0,1]}(\theta
) + \mathds{1}_{]1,+\infty[}(\theta)$.
The sequence $\{\alpha_n\}_n$ goes to $0$ so there exists $N$ such
that for all $n>N$, $\alpha_n<1$.
So, for $n>N$ and for $\theta>0$, $a_n \pi_n(\theta) \leq \theta
^{\alpha_n-1} \leq g(\theta)$.\vspace*{1pt}
Since $g$ is a continuous function on $\R^*_+$, from Proposition~\ref
{corconvdensite}, $\{\Pi_n\}_n$ converges $q$-vaguely to~$\frac
{1}{\theta} \,\dif\theta$.\vspace*{1pt}

Recall that for $\theta\sim\gamma(a,b)$, $\E(\theta)=\frac{a}{b}$
and $\Var(\theta)=\frac{a}{b^2}$.
We can see below that the same convergence may be obtained with
different convergences of the mean and variance.
\begin{itemize}
\item For $\Pi_n=\gamma(\frac{1}{n},\frac{1}{n})$, $\E_{\Pi
_n}(\theta)=1$ for all $n$ and $\lim_{n\to\infty}\Var_{\Pi
_n}(\theta)=\lim_{n\to\infty} n=+\infty$.
\item For $\Pi_n=\gamma(\frac{1}{n},\frac{1}{\sqrt{n}})$, $\lim_{n\to
\infty} \E_{\Pi_n}(\theta) = \lim_{n\to\infty} \frac{1}{\sqrt{n}} = 0$
and $\lim_{n\to\infty} \Var_{\Pi_n}(\theta) = 1$ for all~$n$.
\item For $\Pi_n=\gamma(\frac{1}{n},\frac{1}{n^{\sfrac{1}{3}}})$,
$\lim_{n\to\infty} \E_{\Pi_n}(\theta) = \lim_{n\to\infty} n^{-\sfrac
{2}{3}} = 0$ and $\lim_{n\to
\infty} \Var_{\Pi_n}(\theta)=\lim_{n\to\infty} n^{-\sfrac
{1}{3}} = 0$.
\item For $\Pi_n=\gamma(\frac{1}{n},\frac{1}{n^2})$, $\lim_{n\to\infty
}\E_{\Pi_n}(\theta) = \lim_{n\to\infty}
n = +\infty$ and $\lim_{n\to\infty}\Var_{\Pi
_n}(\theta) = \lim_{n\to\infty} n^3 = +\infty$.

\item For $\Pi_n=\gamma(\frac{1}{n},\frac{1}{n^{\sfrac{2}{3}}})$,
$\lim_{n\to\infty}\E_{\Pi_n}(\theta) = n^{-\sfrac
{1}{2}} = 0$ and $\lim_{n\to\infty}\Var_{\Pi_n}(\theta
) =\break \lim_{n\to\infty} n^{\sfrac{1}{3}} = +\infty$.
\end{itemize}

More generally, if $\liminf_n\E_{\Pi_n}(\theta) > 0$ then $\lim_{n\to
\infty}\Var_{\Pi_n}(\theta) = +\infty$,
since $\Var_{\Pi_n}(\theta) = \frac{\E_{\Pi_n}(\theta
)}{\beta_n}$ with $\lim_{n\to\infty}\beta_n = 0$.

\subsubsection{Approximation of 
\texorpdfstring{$\Pi=\frac{1}{\theta}\ee^{-\theta}\mathds{1}_{\theta>0}\,\dif\theta$}{$Pi=\frac{1}{theta}e^{-theta}\mathds{1}_{theta>0}dtheta$}}

Let us show that $\{\gamma(\alpha_n,1)\}$ converges $q$-vaguely to
$\frac{1}{\theta}\ee^{-\theta} \mathds{1}_{\theta>0} \,\dif\theta
$ when $\{\alpha_n\}$ goes to $0$.
Put $\Pi_n= \{\gamma(\alpha_n,1)\}$.
Then $\pi_n(\theta) = \frac{1}{\Gamma(\alpha_n)} \theta
^{\alpha_n-1}\ee^{-\theta}\mathds{1}_{\theta>0}$ is the p.d.f. of $\Pi_n$.
Put $a_n= \Gamma(\alpha_n)$, then $a_n\pi_n(\theta) = \theta
^{\alpha_n-1} \ee^{-\theta} \mathds{1}_{\theta>0}$ converges to
$\pi(\theta) = \frac{1}{\theta} \ee^{-\theta} \mathds
{1}_{\theta>0}$.
Moreover, since $\{\alpha_n\}_n$ goes to $0$, there exists $N$ such
that for $n>N$, $\alpha_n<1$.
Put $g(\theta) = \frac{1}{\theta} \mathds{1}_{]0,1]}(\theta
) + \mathds{1}_{]1,+\infty[}(\theta)$.
So, for $n>N$ and $\theta>0$, $a_n\pi_n(\theta) \leq \theta^{\alpha
_n-1} \leq g(\theta)$.
The function $g$ is continuous so from Proposition~\ref
{corconvdensite}, $\{\gamma(\alpha_n,1)\}_n$ converges $q$-vaguely to
$\frac
{1}{\theta} \ee^{-\theta} \mathds{1}_{\theta>0} \,\dif\theta$.
Since $\lim_{n\to\infty}\alpha_n = 0$, we necessarily
have $\lim_{n\to\infty}\E_{\Pi_n}(\theta) = 0$ and
$\lim_{n\to\infty}\Var_{\Pi_n}(\theta) = 0$.

\section{Convergence of Beta distributions}\label{section4}

We now consider a more complex example which often appears in
literature; see, for example, Tuyl \textit{et~al.} \cite{MR2486242}.
Let $X$ represents the number of successes in $N$ Bernoulli trials, and
$\theta$ be the probability of a success in a single trial.
Since the Beta distribution and the Binomial distribution form a
conjugate pair, a common prior distribution on $\theta$ is $\beta
(\alpha,\alpha)$ which have mean and median equal to $\frac{1}{2}$.
Three ``plausible'' non-informative priors were listed by Berger (\cite{MR804611},
page~89): the Bayes--Laplace prior $\beta(1,1)$, the Jeffreys prior
$\beta(\frac{1}{2},\frac{1}{2})$ and the improper Haldane prior,
wrote down $\beta(0,0)$, whose density is $\pi_H(\theta)=\frac
{1}{\theta(1-\theta)}$ with respect to the Lebesgue measure on $]0,1[$.
If we want $\beta(\alpha,\alpha)$ with large variance, necessarily
$\alpha$ must be close to $0$.
Thus, we choose $\beta(\frac{1}{n},\frac{1}{n})$.
The\vspace*{1pt} density of $\Pi_n=\beta(\frac{1}{n},\frac{1}{n})$ with respect
to the Lebesgue measure on $]0;1[$ is $\pi_n(\theta)=\frac
{1}{B(\sfrac{1}{n},\sfrac{1}{n})} \theta^{\sfrac{1}{n}-1}(1-\theta
)^{\sfrac{1}{n}-1}$.
As mentioned, for example, by Bernardo \cite{MR547240} or Lane and Sudderth
\cite{MR684869}, there are two possible limiting distributions for $\beta(\frac
{1}{n},\frac{1}{n})$ when $n$ goes to $+\infty$.
The first one is $\frac{1}{2}(\delta_0+\delta_1)$ which is the
limiting measure given by the standard probability theory.
The second one is the Haldane prior $\Pi_H$ which is deduced from the
posterior distributions and estimators (Lehmann and Casella \cite{MR1639875}).
We show that it depends on the space where $\theta$ lives.
Choosing $]0,1[$ or $[0,1]$ does not matter for $\beta(\frac
{1}{n},\frac{1}{n})$ but it matters for the limiting distributions.
We may note that the Haldane prior is a Radon measure on $]0,1[$ but
not on $[0,1]$ and that $\frac{1}{2}(\delta_0+\delta_1)$ is not
defined on $]0,1[$.

\subsection{Convergence on $]0,1[$}

In this section, we study the convergences on $]0,1[$ of $\{\beta
(\frac{1}{n},\frac{1}{n})\}_{n>0}$, of the sequence of posteriors and
of the sequence of estimators.

Put $a_n = B(\frac{1}{n},\frac{1}{n})$, then $a_n \pi_n(\theta
) = \theta^{\sfrac{1}{n}-1} (1-\theta)^{\sfrac{1}{n}-1}$ converges
to $\pi_H(\theta) = [\theta(1-\theta)]^{-1}$ and for any
$\theta$ and $n$, $a_n \pi_n(\theta) < 5$.
Therefore, from Theorem~\ref{corconvdensite}, $\{\beta(\frac
{1}{n},\frac{1}{n})\}_{n>0}$ converges $q$-vaguely to $\Pi_H$.

Consider the sequence of posteriors.
The sequence of priors $\{\Pi_n\}_n$ converges $q$-vaguely to $\Pi_H$
and $\theta\longmapsto f(x\mid\theta)$ is continuous on $\Theta$.
Then, from Lemma~\ref{convpost}:
\begin{itemize}
\item if\vspace*{1pt} $x=0$, $\{\Pi_n(\theta\mid x)\}_n$ converges $q$-vaguely to the
improper measures with p.d.f. $\pi(\theta) = (1-\theta)^{N-1} \theta^{-1}$,
\item if $x=N$, $\{\Pi_n(\theta\mid x)\}_n$ converges $q$-vaguely to the
improper measures with p.d.f. $\pi(\theta) = \theta^{N-1}(1-\theta)^{-1}$,
\item if $0<x<N$, $\{\Pi_n(\theta\mid x)\}_n$ converges $q$-vaguely to
$\Pi_H(\theta\mid x) = \beta(x, N-x)$.
\end{itemize}
For $0<x<N$, $\beta(x, N-x)$ is proper and $\theta\longmapsto f(x\mid
\theta)$ is continuous and positive. So, from Theorem~\ref
{suitecroissposterior},
$\{\Pi_n(\theta\mid x)\}_{n>0}$ converges narrowly to $\Pi_H(\theta
|x)=\beta(x, N-x)$.

Consider now the Bayes estimators $\E_{\Pi_n}(\theta\mid x)=\frac{1 + n
x}{2 + n N}$ which tend to $\frac{x}{N}$.
So:
\begin{itemize}
\item If $x=0$, $\lim_{n\to\infty}\E_{\Pi_n}(\theta
|x=0)=0$ whereas $\E_{\Pi_H}(\theta\mid x=0)=\frac{1}{N}$.
\item If $x=N$, $\lim_{n\to\infty}\E_{\Pi_n}(\theta
|x=N)=1$ whereas $\E_{\Pi_H}(\theta\mid x=N)=+\infty$.
\item If $0<x<N$, $\lim_{n\to\infty}\E_{\Pi_n}(\theta
|x)=\frac{x}{N}=\E_{\Pi_H}(\theta\mid x)$.
\end{itemize}
For $x=0$ and $x=N$, $\Pi_H(\cdot\mid x)$ is an improper measure. In this
case, $\E_{\Pi_H}(\theta\mid x) = \int_{\Theta}\theta\,\dif\Pi
_H(\theta\mid x)$.

\subsection{Convergence on $[0,1]$}

In this section, we study the convergences on $[0,1]$ of $\{\beta
(\frac{1}{n},\frac{1}{n})\}_{n>0}$, of the sequence of posteriors and
of the sequence of estimators.

For all $n$ and for $0<t<1$, $\Pi_n([0,t[) + \Pi_n([t,1-t]) + \Pi
_n(]1-t$, $1]) = 1$.
But on $]0,1[$, $\{\beta(\frac{1}{n},\frac{1}{n})\}_{n>0}$ converges
$q$-vaguely to the improper measure $\Pi_H$, so $\lim_{n\to
\infty}\Pi_n([t,1-t]) = 0$.
Moreover, for all $n$, $\Pi_n([0,t[)=\Pi_n(]1-t$, $1])$. Thus, for
all $0<t<1$, $\lim_{n\to\infty}\Pi_n([0,t[)=\frac{1}{2}$.
From Billingsley (\cite{MR830424}, page~192), $\{\beta(\frac{1}{n},\frac{1}{n})\}
_{n>0}$ converges narrowly to $\frac{1}{2}(\delta_0+\delta_1)=\Pi
_{\{0,1\}}$.
By Theorem~\ref{unicitelimite}, $\{\beta(\frac{1}{n},\frac{1}{n})\}
_{n>0}$ cannot converge to an other limit such as the Haldane measure,
which is not a Radon measure on $[0,1]$.

The limit of the posterior distributions can be deduced from the limit
of the prior distributions only for $x=0$ and $x=N$.
\begin{itemize}
\item If $x=0$, $\{\Pi_n(\theta\mid x=0)\}$ converges narrowly to $\Pi
_{\{0,1\}}(\theta\mid x=0)=\delta_0$.
\item If $x=N$, $\{\Pi_n(\theta\mid x=N)\}$ converges narrowly to $\Pi
_{\{0,1\}}(\theta\mid x=N) = \delta_1$.
\item If $0<x<N$, $\{\Pi_n(\theta\mid x)\}$ converges narrowly to $\beta
(x,N-x)$ whereas $\Pi_{\{0,1\}}(\theta\mid x)$ does not exist.
\end{itemize}

Similarly, the limit of the estimators can be deduced from the limit of
the prior distributions only for $x=0$ and $x=N$.
\begin{itemize}
\item If $x=0$, $\lim_{n\to\infty}\E_{\Pi_n}(\theta
|x=0)=0=\E_{\Pi_{\{0,1\}}}(\theta\mid x=0)$.
\item If $x=N$, $\lim_{n\to\infty}\E_{\Pi_n}(\theta
|x=N)=1=\E_{\Pi_{\{0,1\}}}(\theta\mid x=N)$.
\item If $0<x<N$, $\lim_{n\to\infty}\E_{\Pi_n}(\theta
|x)=\frac{x}{N}$ whereas $\E_{\Pi_{\{0,1\}}}(\theta\mid x)$ does not exist.
\end{itemize}

\section{The Jeffreys--Lindley paradox}\label{section5}

Consider the standard Gaussian model $X|\theta\sim\mathcal{N}(\theta
,1)$ and the point null hypothesis $H_0\dvt\theta=0$ tested against
$H_1\dvt\theta\neq0$.
If we use the prior $\pi(\theta) = \frac{1}{2}\mathds
{1}_{\theta=0} + \frac{1}{2}\mathds{1}_{\theta\neq0}$ with
respect to the measure $\delta_0+\lambda_{\R}$,
it corresponds to the mass $\frac{1}{2}$ on $H_0$ and the Laplace
prior on $H_1$.
The posterior probability of $H_0$ is $\Pi(\theta=0|x) = [1+\sqrt
{2\uppi}\ee^{x^2/2}]^{-1}$ so $\Pi(\theta=0|x)\leq [1+\sqrt
{2\uppi} ]^{-1}\approx0.285$ whatever the data are.
An alternative is to use a sequence of proper priors $\{\Pi_n\}_n$
whose p.d.f. are $\pi_n(\theta) = \frac{1}{2} \mathds{1}_{\theta
=0} + \frac{1}{2} \mathds{1}_{\theta\neq0} \frac{1}{\sqrt{2\uppi
}n}\ee^{-\afrac{\theta^2}{2n^2}}$.
With these priors, we have $\pi_n(\theta=0|x)= [1+\sqrt{\frac
{1}{1+n^2}}\ee^{\vafrac{n^2 x^2}{2(1+n^2)}} ]^{-1}$ which converges
to $1$.
This limit differs from the ``non-informative'' answer $ [1+\sqrt
{2\uppi}\ee^{x^2/2} ]^{-1}$ and is considered as a paradox.
In the light of the concept of $q$-vague convergence, this result is
not paradoxal since, as shown in Proposition~\ref{JeffreysParadox},
the sequence of priors $\{\frac{1}{2}\delta_0+\frac{1}{2}\mathcal
{N}(0,n^2)\}_n$ converges vaguely to $\frac{1}{2}\delta_0$,
and, the limiting posterior distribution corresponds to the posterior
of the limit of the prior distributions.
The following proposition generalizes this example.

%
\begin{Prop}\label{JeffreysParadox}
Consider\vspace*{1pt} a partition: $\Theta= \Theta_0 \cup\Theta_1$
where $\Theta_0 = \{\theta_0\}$.
Let $\{\widetilde{\Pi}_n\}_n$ be a sequence of probabilities on
$\Theta$ which converges $q$-vaguely\vspace*{1pt} to the improper measure
$\widetilde{\Pi}$ and such that
$\widetilde{\Pi}_n(\theta_0) = \widetilde{\Pi}(\theta_0) = 0$.
Put $\Pi_n = \rho\delta_{\theta_0} + (1-\rho) \widetilde
{\Pi}_n$ where $0<\rho<1$,
then $\{\Pi_n\}_n$ converges vaguely to $\rho\delta_{\theta_0}$.

Moreover, assume that $\theta\longmapsto f(x\mid\theta)$ is
continuous and belongs to $\mathcal{C}_0$.
Then $\{\Pi_n(\cdot\mid x)\}$ converges narrowly to $\Pi(\cdot\mid x)$.
\end{Prop}

\begin{pf}
From\vspace*{1pt} Definition~\ref{caractconv}, there exists $\{a_n\}_{n}$ such
that $\{a_n\widetilde{\Pi}_n\}_{n}$ converges vaguely\break to $\widetilde
{\Pi}$.
For $g \in\mathcal{C}_K$, $\Pi_n(g) = \rho g(\theta_0) + (1-\rho)
\widetilde{\Pi}_n(g) = \rho g(\theta_0) + \frac{1-\rho}{a_n} a_n
\widetilde{\Pi}_n(g)$.
But\break $\lim_{n\to\infty} a_n\widetilde{\Pi}_n(g) = \widetilde{\Pi}(g) <
\infty$.
So, $\lim_{n\to\infty}\frac{1-\rho}{a_n} a_n \widetilde{\Pi}_n(g) = 0$
since,\vspace*{2pt} from Lemma~\ref{aninf},
$\lim_{n\to\infty} a_n = +\infty$.
Thus, $\lim_{n\to\infty}\Pi_n(g) = \rho g(\theta
_0)$. The first result follows.

The second part is a direct consequence of Theorem~\ref
{convposteriorquandpriorconvvague}.
\end{pf}

In the Proposition~\ref{JeffreysParadox}, it is assumed that $\theta
\longmapsto f(x\mid\theta)\in\mathcal{C}_0(\Theta)$.
Now, we consider the case where the limit of the likelihood $f(x\mid
\theta
)$ when $\theta$ is outside of any compact is not $0$ but $f(x\mid
\theta_0)$.
In that case, the limit of the posterior probabilities is the same as
the limit of the prior probabilities, as stated in the following proposition.

%
\begin{Prop}
Consider the same notations and assumptions of Proposition~\ref
{JeffreysParadox}.
Moreover, assume that $\theta\longmapsto f(x\mid\theta)$ is
continuous and such that for all $\varepsilon>0$, there exists a
compact $K$ such that for all
$\theta\in K^c$, $|f(x\mid\theta)-f(x\mid\theta_0)| \leq \varepsilon$.
Then $\lim_{n\to\infty}\Pi_n(\theta=\theta_0|x) = \Pi
(\theta=\theta_0)$ and $\lim_{n\to\infty}\Pi_n(\theta
\neq\theta_0|x) = \Pi(\theta\neq\theta_0)$.
\end{Prop}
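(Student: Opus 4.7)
The plan is to apply Bayes' formula directly to the mixed prior $\Pi_n = \rho\delta_{\theta_0} + (1-\rho)\widetilde{\Pi}_n$ and reduce everything to a single analytic claim: that $I_n := \int f(x|\theta)\,d\widetilde{\Pi}_n(\theta)$ converges to $f(x|\theta_0)$. Indeed, since $\widetilde{\Pi}_n(\{\theta_0\})=0$, Bayes' formula yields
\begin{equation*}
\Pi_n(\theta=\theta_0\mid x) = \frac{\rho\,f(x|\theta_0)}{\rho\,f(x|\theta_0) + (1-\rho)\,I_n},
\qquad \Pi_n(\theta\neq\theta_0\mid x) = 1 - \Pi_n(\theta=\theta_0\mid x).
\end{equation*}
If one grants $I_n\to f(x|\theta_0)$, the right-hand side tends to $\rho = \Pi(\theta=\theta_0)$ and its complement to $1-\rho = \Pi(\theta\neq\theta_0)$, finishing the proof.

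The key step is therefore the limit $I_n\to f(x|\theta_0)$, and the strategy is to combine the uniform-at-infinity hypothesis on $f(x|\cdot)$ with the mass-escape property of a $q$-vague approximating sequence. Fix $\varepsilon>0$ and, using the standing assumption, pick a compact $K\subset\Theta$ so that $|f(x|\theta) - f(x|\theta_0)|\le\varepsilon$ for all $\theta\in K^c$. Split
\begin{equation*}
I_n = \int_K f(x|\theta)\,d\widetilde{\Pi}_n(\theta) + \int_{K^c} f(x|\theta)\,d\widetilde{\Pi}_n(\theta).
\end{equation*}
Since $f(x|\cdot)$ is continuous on the compact $K$, it is bounded there by some $M_K<\infty$, so the first integral is at most $M_K\,\widetilde{\Pi}_n(K)$. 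Because $\widetilde{\Pi}_n$ is a sequence of probabilities converging $q$-vaguely to the improper measure $\widetilde{\Pi}$, Proposition \ref{lem compact} gives $\widetilde{\Pi}_n(K)\to 0$, hence this piece vanishes.

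For the second integral, write
\begin{equation*}
\int_{K^c} f(x|\theta)\,d\widetilde{\Pi}_n(\theta) = f(x|\theta_0)\,\widetilde{\Pi}_n(K^c) + R_n, \quad |R_n| \le \varepsilon\,\widetilde{\Pi}_n(K^c) \le \varepsilon,
\end{equation*}
and use again Proposition \ref{lem compact} in the form $\widetilde{\Pi}_n(K^c)\to 1$. Combining the two pieces gives $\limsup_n |I_n - f(x|\theta_0)| \le \varepsilon$, and letting $\varepsilon\downarrow 0$ yields $I_n\to f(x|\theta_0)$, from which the proposition follows by the Bayes identity above.

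I do not anticipate a genuine obstacle. The only delicate point is that the mild condition $|f(x|\theta)-f(x|\theta_0)|\le\varepsilon$ on $K^c$ is exactly the replacement for the $\mathcal{C}_0$ assumption used in Lemma \ref{Convergence etroite des a prosteriori}: it is too weak to make the likelihood integrable against $\widetilde{\Pi}_n$ on $K^c$ in an absolute sense, but it is perfectly tailored to control the deviation of $I_n$ from $f(x|\theta_0)$, because the mass of $\widetilde{\Pi}_n$ on $K^c$ is itself bounded by $1$.
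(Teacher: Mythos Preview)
Your proof is correct and follows essentially the same route as the paper: both apply Bayes' formula, reduce to showing $I_n\to f(x|\theta_0)$, split the integral over $K$ and $K^c$, and invoke Proposition~\ref{lem compact} for the mass escape. Your treatment of the compact piece is slightly more direct (continuity gives a bound $M_K$ and $\widetilde{\Pi}_n(K)\to 0$ suffices), whereas the paper inserts a dominating $g\in\mathcal{C}_K$ and appeals to $a_n\to\infty$ from Lemma~\ref{a_n inf}; the two arguments are equivalent here.
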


\begin{pf}
By Bayes formula: $\Pi_n(\theta=\theta_0|x)=\frac{\rho f(x\mid\theta
_0)}{\rho f(x\mid\theta_0)+(1-\rho)\int_{\Theta}f(x\mid\theta)\,\dif
\widetilde{\Pi}_n(\theta)}$.
But, for all $\varepsilon>0$, there exists a compact $K$ such that,
for all $\theta\in K^c$, $|f(x\mid\theta)-f(x\mid\theta_0)|\leq
\varepsilon$.
So $\int_{\Theta}f(x\mid\theta)\,\dif\widetilde{\Pi}_n(\theta)=\int_K
f(x\mid\theta)\,\dif\widetilde{\Pi}_n(\theta)+\int_{K^c}f(x\mid\theta
) \,\dif\widetilde{\Pi}_n(\theta)$, where:
\begin{itemize}
\item$(f(x\mid\theta_0)-\varepsilon) \widetilde{\Pi}_n(K^c) \leq \int
_{K^c}f(x\mid\theta)\,\dif\widetilde{\Pi}_n(\theta) \leq (f(x\mid\theta
_0)+\varepsilon) \widetilde{\Pi}_n(K^c)$.
From Proposition~\ref{lemcompact}, $\lim_{n\to\infty
}\widetilde{\Pi}_n(K^c) = 1$. So, $\lim_{n\to\infty
}\int_{K^c}f(x\mid\theta) \,\dif\widetilde{\Pi}_n(\theta) = f(x\mid
\theta_0)$.
\item There exists $g \in\mathcal{C}_K(\Theta)$ such that $0 \leq g
\leq 1$ and $g \mathds{1}_K = 1$.
For a such $g$,
\[
\lim_{n\to\infty}\int_K f(x\mid\theta) \,\dif\widetilde
{\Pi}_n(\theta) \leq \lim_{n\to\infty}\frac{1}{a_n}a_n \int_{\Theta}
g(\theta) f(x\mid\theta) \,\dif\widetilde{\Pi}_n(\theta) = 0
\]
since $\lim_{n\to\infty} a_n\int_{\Theta} g(\theta) f(x\mid\theta)
\,\dif\widetilde{\Pi}_n(\theta) = \int_{\Theta} g(\theta) f(x\mid\theta
) \,\dif\widetilde{\Pi
}(\theta) < +\infty$
and $\lim_{n\to\infty} a_n = +\infty$ from Lemma~\ref{aninf}.
\end{itemize}
Thus, $\lim_{n\to\infty}\Pi_n(\theta=\theta_0|x)=\frac
{\rho f(x\mid\theta_0)}{\rho f(x\mid\theta_0)+(1-\rho)f(x\mid\theta
_0)}=\rho
=\Pi(\theta=\theta_0)$.
\end{pf}

To illustrate this result in a more general case, we consider an
example proposed by Dauxois \textit{et~al.} \cite{MR2273734}.
They consider a model choice between $\mathcal{P}(m)$ the Poisson
distribution, $\mathcal{B}(N,m)$ the Binomial distribution and
$\mathcal{NB}(N,m)$ the Negative Binomial distribution.
These models belong to the general framework of Natural Exponential
Families (NEFs) and are determined by their variance function
$V(m)=am^2+m$ where $m$ is the mean parameter.
Thus, a null value for $a$ relates to the Poisson NEF, a negative one
to the Binomial NEF and a positive one to the Negative Binomial NEF.
The prior distribution chosen on the parameter $a$ is $\Pi_K$ defined by
\[
\Pi_K(a)= \cases{ \displaystyle\frac{1}{3}, &\quad if $a=0$,
\vspace*{3pt}\cr
\displaystyle\frac{1}{3K}, &\quad if $\displaystyle\frac{1}{a}\in
\{ 1,\ldots,K\}$,
\vspace*{3pt}\cr
\displaystyle\frac{1}{3K}, &\quad if $\displaystyle-
\frac{1}{a}\in\{n_0,\ldots,n_0+K-1\}$,}
\]
where $K$ is an hyperparameter.
Note that $\Pi_K(a=0) = \Pi_K(a>0) = \Pi_K(a<0) = \frac{1}{3}$.

Dauxois \textit{et~al.} \cite{MR2273734} showed that the sequence of posterior
distributions does not converge to~$\delta_0$ as in the previous case
but $\Pi_K(a=0|X=x)$, $\Pi_K(a>0|X=x)$ and
$\Pi_K(a<0|X=x)$ converge to the prior probabilities $\Pi_K(a=0)$,
$\Pi_K(a>0)$ and $\Pi_K(a<0)$ whatever the data are when
$K\rightarrow+\infty$.

\begin{appendix}

\section*{Appendix: Properties of the quotient space}

\setcounter{Th}{0}

%
\begin{Prop}\label{Hausdorffspace}
$\overline{\mathcal{R}}$ is a Hausdorff space.
\end{Prop}

\begin{pf}
This proof is based on two results of Bourbaki \cite{MR0205210}.
\begin{longlist}
\item[\textit{Step} 1.] $\mathcal{R}$ is a topological space and
$\Gamma=\{\sigma_\alpha\dvtx \Pi\longmapsto\alpha\Pi$, $\alpha\in
\mathbb{R}_+^*\}$ is a homeomorphism group of $\mathcal{R}$.
We consider the equivalence relation:
$\Pi\sim\Pi'$ iff there exists $\alpha>0$ such that $\Pi=\alpha
\Pi'$, {that is}, there exists $\sigma_\alpha\in\Gamma$
such that $\Pi=\sigma_\alpha(\Pi')$.
So, from Bourbaki (\cite{MR0205210}, Section~I.31), $\sim$ is open.

\item[\textit{Step} 2.] Let us show that $G=\{(\Pi,\alpha\Pi)$, $(\Pi,\alpha
\Pi)\in\mathcal{R}\times\mathcal{R}\}$ which is the graph of $\sim
$ is closed.
Let $\{(\Pi_n,\alpha_n\Pi_n)\}_{n>0}$ be a sequence in $G$ such that
$\lim_{n\to\infty}(\Pi_n,\alpha_n\Pi_n) = (\Pi_0,\Pi'_0)$.
The aim is to show that $(\Pi_0,\Pi'_0)\in G$, {that is},
$(\Pi_0,\Pi'_0)$ takes the form $(\Pi_0,\alpha_0\Pi_0)$ where $
\alpha_0\Pi_0\neq0$.
Since $\Pi_0\neq0$, there exists $f_0\in\mathcal{C}_K$ such that
$\Pi_0(f_0)>0$.
Moreover, $\lim_{n\to\infty}\Pi_n(f_0) = \Pi_0(f_0)$
so there exists $N$ such that for all $n\geq N$, $\Pi_n(f_0)>0$.
For all $n\geq N$, $\lim_{n\to\infty}\alpha_n=\lim_{n\to\infty}\frac
{\alpha_n\Pi_n(f_0)}{\Pi_n(f_0)} = \frac{\Pi'_0(f_0)}{\Pi_0(f_0)} =
\alpha_0$.
Thus, for all $f\in\mathcal{C}_ K$, $\lim_{n\to\infty
}\alpha_n\Pi_n(f) = \alpha_0\Pi_0(f)$ and $\lim_{n\to
\infty}\alpha_n\Pi_n(f) = \Pi'_0(f)$.
Since $\mathcal{R}$ is a Hausdorff space, $\alpha_0\Pi_0(f) = \Pi'_0(f)$.
So, the graph of $\sim$, $G$, is closed.
The result follows from Bourbaki (\cite{MR0205210}, Section I.55).
\end{longlist}
\end{pf}
\end{appendix}

\section*{Acknowledgment}
The authors are grateful to Professors C.P. Robert, J. Rousseau and S.
Dachian for helpful discussions.
We acknowledge the comments from reviewers which resulted in an
improved paper.


%

\printhistory
\end{document}